\theoremstyle{thmstyleone}%
\newtheorem{theorem}{Theorem}%  meant for continuous numbers
\newtheorem{corollary}[theorem]{Corollary}
\newtheorem{proposition}[theorem]{Proposition}% 
\theoremstyle{thmstyletwo}%
\newtheorem{example}{Example}%
\theoremstyle{thmstylethree}%
\newcommand{\ve}{\varepsilon}
\newcommand{\vk}{\varkappa}
\renewcommand{\R}{\mathbb{R}}
\newcommand{\Y}{\mathcal{Y}}
\begin{document}

\title[Long-Time Influence of Perturbations]{Perturbation of Systems with a First Integral: Motion on the Reeb Graph}

%%=============================================================%%
%% Prefix	-> \pfx{Dr}
%% GivenName	-> \fnm{Joergen W.}
%% Particle	-> \spfx{van der} -> surname prefix
%% FamilyName	-> \sur{Ploeg}
%% Suffix	-> \sfx{IV}
%% NatureName	-> \tanm{Poet Laureate} -> Title after name
%% Degrees	-> \dgr{MSc, PhD}
%% \author*[1,2]{\pfx{Dr} \fnm{Joergen W.} \spfx{van der} \sur{Ploeg} \sfx{IV} \tanm{Poet Laureate} 
%%                 \dgr{MSc, PhD}}\email{iauthor@gmail.com}
%%=============================================================%%

\author*[1]{\fnm{Mark I.} \sur{Freidlin}}\email{mif@umd.edu}
\affil*[1]{\orgdiv{Department of Mathematics}, \orgname{University of Maryland}, \orgaddress{\city{College Park}, \postcode{20742}, \state{MD}, \country{USA}}}

%%==================================%%
%% sample for unstructured abstract %%
%%==================================%%
\maketitle

\section*{Abstract}
	We consider the long-time behavior of systems close to a system with a smooth first integral. Under certain assumptions, the limiting behavior, to some extent, turns out to be universal: it is determined by the first integral, the deterministic perturbation, and the initial point. Furthermore, it is the same for a broad class of noises. In particular, the long-time behavior of a deterministic system can, in a sense, be stochastic and stochastic systems can have a reduced stochasticity. The limiting distribution is calculated explicitly.

\section{Introduction}
Let $f(x), x \in \R^d,$ be a regular enough vector field and $H(x)$ be a smooth first integral of the system 
\begin{equation} \label{eq.smooth_first_int}
	\dot{X}(t) = f(X(t)), \quad X(0) = x \in \R^d
\end{equation}
This means that $H(X(t)) \equiv H(x)$, what is equivalent to the identity $\nabla H(x) \cdot f(x) \equiv 0$. Consider a small perturbation of (\ref{eq.smooth_first_int}):
\begin{equation} \label{eq.smooth_pert}
	\dot{\tilde{X}}^\ve(t) = f(\tilde{X}^\ve(t)) + \ve b(\tilde{X}^\ve(t)), \quad \tilde{X}^\ve(0) = x,
\end{equation}
where $b(x)$ is a bounded smooth vector field with $0 < \ve \ll 1$.

\noindent An important class of dynamical system having a first integral is given by the Hamiltonian systems:
\begin{quote}
	Let $d=2n$, $x = (p_1, ..., p_n, q_1, ..., q_n) \in \R^{2n}$, $f(x) = \overline{\nabla}H(x) = (- \nabla_q H(p,q), \nabla_p H(p,q))$, where $\nabla_q H$ and $\nabla_p H$ are the gradients of $H(p,q)$ in $q$ and $p$ respectively. Then the system (\ref{eq.smooth_first_int}) has the form
	\begin{equation}
		\dot{X}(t) = \overline{\nabla} H(X(t)), \quad X(0) = x \in \R^{2n} \label{eq.e3}
	\end{equation}
	and the perturbed system is 
	\begin{equation}
	\dot{\tilde{X}}^\ve(t) = \overline{\nabla}H(\tilde{X}^\ve(t)) + \ve b(\tilde{X}^\ve(t)), \quad \tilde{X}^\ve(0) = x. \label{eq.e4}
	\end{equation}
\end{quote}
The Hamiltonian $H(x)$ is a first integral of system (\ref{eq.e3}). Of course, $\tilde{X}^\ve(t)$ will be uniformly close to $X(t)$ on each finite time interval $[0,T]$ as $\ve$ is close to zero. But $\tilde{X}^\ve(t)$ can deviate from $X(t)$ on a distance of order 1 as $\ve \rightarrow 0$ on time intervals of order $\ve^{-1}$. To describe the deviations, it is convenient to make a time change $X^\ve(t) = \tilde{X}^\ve(t/\ve)$. This yields
\begin{equation}
	\dot{X}^\ve(t) = \frac{1}{\ve}\overline{\nabla}H(X^\ve(t)) + b(X^\ve(t)), \quad X^\ve(0) = x. \label{eq.e5}
\end{equation}

If $n=1$, $\lim H(x) = \infty$, and $H(x)$ has just one minimum, $H(X^{\ve}_t)$ describes the evolution of the slow component of $X_t^{\ve}$. The classical averaging principle allows to calculate $\lim_{\ve \downarrow 0} H(X_t^{\ve})$ for any $t$. But, even in the $n=1$ case, the standard averaging does not allow to describe the behavior of $H(X^{\ve}_t)$ as $\ve \downarrow 0$ for large enough $t$ if $H(x)$ has saddle points. The problem should be, in a sense, regularized. For the case of one degree of freedom, this was done in \cite{bib2,bib9}. A noise of small intensity $\delta$ was introduced in system (\ref{eq.e5}). Then, the solution and its slow component became stochastic processes depending on $\ve$ and $\delta$. It was shown that there exists the double limit of the slow component as, first, $\ve \downarrow 0$ and then $\delta \downarrow 0$. A similar approach is used here in the multidimensional case.

In reality, we always deal with systems perturbed by a small noise: the initial point can be perturbed by a small random variable and/or the noise can be added to the equation. Let a white-noise-type perturbation be added to the equation. Moreover, let the noise consist of two components: Component 1 is reversable and preserves the energy (the Hamiltonian $H(x))$, Component 2 destroys the energy integral $H(x)$.

More precisely, instead of dynamical system (\ref{eq.e5}), we observe the diffusion process $X^{\ve,\vk,\delta}(t)$ in $\R^{2n}$ governed by the operator $L^{\ve,\vk,\delta}$,
\begin{align}
	L^{\ve,\vk,\delta}u(x) &= \frac{1}{\ve} \left[ \overline{\nabla}H(x) \cdot \nabla u(x) + \frac{\vk}{2} \text{div} \left( a^{(1)}(x) \nabla u(x) \right)\right] \notag \\
	&\quad + \delta \left[ \frac{1}{2} \text{div} \left( a^{(2)}(x) \nabla u(x) \right) + \beta(x) \cdot \nabla u(x) \right] + b(x) \cdot \nabla u(x) \notag \\
	&= \frac{1}{\ve}\left[ \overline{\nabla}H \cdot \nabla u + \vk L_1 u \right](x) + \delta L_2 u(x) + b(x) \cdot \nabla u(x) \label{eq.e6}
\end{align}
Here $a^{(1)}(x)$ and $a^{(2)}(x)$ are $2n \times 2n$-matrices with smooth and bounded entries, $\beta(x)$ is a smooth vector field in $\R^{2n}$. Parameters $\vk$ and $\delta$ characterize the intensities of random perturbations. We assume that 
\begin{enumerate}
	\item $H(x)$ is smooth enough, has a finite number of critical points that are non-degenerate, and $\lim_{\lvert x \rvert \rightarrow \infty} H(x) = \infty$
	\item $a^{(1)}(x)\nabla H(x) \equiv 0$ for $x \in \R^{2n}, a^{(1)}(x)e \cdot e \geq \lambda^{(1)} \lvert e \rvert^2$ for each $e$ such that $e \cdot \nabla H(X)=0$ with some $\lambda^{(1)}(x) > 0$ if $x$ is not a critical point.
	\item $a^{(2)}(x)$ is positive definite, smooth, and bounded for $x \in \R^{2n}$.
\end{enumerate}
Assumptions on the coefficients of the operator $L^{\ve, \vk, \delta}$ are formulated in Section 3 in more detail. 

One can check that, under the above assumptions with respect to the matrix $a^{(1)}(x)$, the process $X^{\ve,\vk}(t)$ in $\R^{2n}$ governed by the operator $\frac{1}{\ve}\left[ \vk L_1 + \overline{\nabla} H(x) \cdot \nabla \right]$ preserves $H(x)$: $H(X^{\ve,\vk}(t)) \equiv H(x)$ for all $t \geq 0$ with probability 1 (see \cite{bib7}).

We observe the process $X^{\ve, \vk, \delta}(t)$ on a time interval $[0,T]$. It has a fast component that is roughly a motion on connected components $C_k(z)$ of the level set $C(z) = \{ x \in \R^{2n} : H(x) = z \}$, and a slow component that can be described by a motion on the graph $\Gamma$ obtained after identification of points of each connected component of the Hamiltonian (see more details in \cite{bib7} and in the Section 2). 

The fast component on $C_k(z)$ can be approximated for small $\delta > 0$ by the process $X^{\ve, \vk}_*(t)$, $X^{\ve, \vk}_*(0) = x \in C_k(z)$. The latter process has a unique invariant probability measure on $C_k(z)$. If $C_k(z)$ does not contain critical points, this measure has the density $$m_{z,k}(x) := \left( \lvert \nabla H(x) \rvert \oint_{C_k(z)} \frac{dl}{\lvert \nabla H(x) \rvert} \right)^{-1}, x \in C_k(z),$$ and the process $X^{\ve,\vk}(t)$ on $C_k(z)$ is ergodic. This allows us to use the standard averaging inside the edges of $\Gamma$ and describe the limiting behavior as $\ve \rightarrow 0$ of the slow component inside each edge \cite{bib11}. Since $m_{z,k}(x)$ is independent of $\vk > 0$, the limiting slow component is also independent of $\vk$.

Let $Y:\R^{2n} \rightarrow \Gamma$ be the projection of $\R^{2n}$ onto $\Gamma$: $Y(x)$ with $x \in \R^{2n}$ is the point on $\Gamma$ corresponding to the level set component containing $x$. Let us number the edges of $\Gamma$. Then the slow component $Y(X^{\ve,\vk,\delta}(t)) = (z^{\ve,\vk,\delta}(t),k^{\ve,\vk,\delta}(t))$ where $z^{\ve,\vk,\delta}(t) = H(X^{\ve,\vk,\delta}(t))$, $k^{\ve,\vk,\delta}(t)$ is the number of the edge containing $Y(X^{\ve,\vk,\delta}(t))$.

It turns out that the exterior vertices (vertices corresponding to the extremums of $H(x)$) are not accessible in finite time for process $X^{\ve,\vk,\delta}(t)$, but the interior vertices (vertices corresponding to the saddle points) are accessible. This means that the standard averaging principle is not sufficient for description of the limiting slow motion. One should describe what the trajectory of the limiting slow motion is doing after hitting an interior vertex. The case of Hamiltonian systems with one degree of freedom $(n=1)$, $\vk=0$, and $b(x) = 0$ was considered in detail (see \cite{bib9} and the references there). We calculated there the gluing conditions, at interior vertices that, together with standard averaging inside the edges, uniquely describe the limiting slow motion. It turns out that these gluing conditions are defined by only the coefficients $a^{(2)}(x)$; they are independent of the drift.

These results allowed to calculate, in the case of one degree of freedom, the double limit $\overline{y}(t)$ of the slow component $Y(X^{\ve,0,\delta}(t))$ as, first $\ve \rightarrow 0$ and then $\delta \rightarrow 0$ \cite{bib2}. It turns out that the limiting process $\overline{y}(t)$ on $\Gamma$ is deterministic inside the edges, but at certain interior vertices (three edges are attached to each interior vertex), we will have a residual stochasticity: the process $\overline{y}(t)$ enters such a vertex along one of attached edges and leaves the vertex along the other two edges with a non-trivial probability distribution. So that the process that we observe for small $\ve$ and $\delta$ will have certain stochasticity. Moreover, the distribution between these two exit edges is independent of the noise: it will be the same for various matrices $a^{(2)}(x)$ (non-degenerated) and fields $\beta(x)$. If just the initial point is perturbed, say $X^{\ve, \vk, \delta}(0)$ is distributed uniformly in the $\delta$-neighborhood of $x$, the double limit of the slow component as first $\ve \rightarrow 0$ and then $\delta \rightarrow 0$ may not exist. But it exists under reasonable additional assumptions and, if the double limit exists, it coincides with the double limit described above. This means that stochasticity of the long-time behavior of $\tilde{X}^{\ve}(t)$ as $\ve \rightarrow 0$ in the case of one degree of freedom is an intrinsic property of the deterministic system $\tilde{X}^\ve(t)$ defined by (\ref{eq.e4}). This stochasticity is actually caused by the instabilities in the Hamiltonian system.

The goal of this paper is to show that a similar effect can be observed in the case of multidimensional systems with a conservation law. We consider in detail the case of Hamiltonian systems. The general systems with conservation law are shortly considered in the last section.

In the case of one degree of freedom, under our assumptions, one ergodic invariant probability measure of the non-perturbed system is concentrated on each connected level set component. Therefore, even if $\vk=0$, the standard averaging principle allows us to calculate the limiting slow motion for $\ve \rightarrow 0$ inside the edges of $\Gamma$. In the case of many degrees of freedom, as a rule, there are many invariant probability measures on the connected level sets components. Therefore, we have to consider \emph{qualified} random perturbations -- perturbations consisting of two parts: one of intensity $\vk$ which reserves the first integral and another one of intensity $\delta$ which destroy the first integral. We assume that the first part is non-degenerate on the level set components not containing critical points, and that $\delta \ll \vk$ (see the description of qualified perturbations in Section 3).

One should note that the perturbations can have different origin, and consideration of qualified perturbations is natural. For instance, fluctuations of the exterior magnetic field are not changing the energy $H(x)$. Another example: perturbations of Landau-Lifshitz equation (see, for instance, \cite{bib5} and references there).

The next question concerns the structure of the graph in multidimensional case. If $x \in \R^2$, each non-degenerate critical point of $H(x)$ is either an extremum of $H(x)$, in this case just one edge attached to corresponding vertex of $\Gamma$, or a saddle point which corresponds to a vertex with 3 attached edges. If $x \in \R^d$ with $d > 2$, the situation is more sophisticated. It is described in Section 2. 

Limiting behavior of the process $\mathcal{Y}^{\ve, \vk, \delta}(t) = Y(X^{\ve, \vk, \gamma}(t))$ on the graph $\Gamma$ as $\ve \rightarrow 0$ and $\vk, \delta > 0$ are fixed is considered in Section 3. The process $\mathcal{Y}^{\ve, \vk, \delta}(t)$ converges weakly in the space of continuous function $\varphi: [0,T] \rightarrow \Gamma, 0 < T < \infty$ to a diffusion process $Y^{\delta}(t)$ on $\Gamma$. The limit $\mathcal{Y}(t)$ of $\mathcal{Y}^{\delta}(t)$ as $\delta \rightarrow 0$ is considered in Section 4. We will see that the stochastic process $\mathcal{Y}(t)$ on $\Gamma$ will be deterministic inside each edge of $\Gamma$. But when trajectory of $\mathcal{Y}(t)$ hits some of interior vertices it can go to other edges attached to this vertex with certain non-trivial distribution. This distribution will be the same for various qualified random perturbations. In this restricted sense the residual stochasticity is an intrinsic property of deterministic system (\ref{eq.e4}) in the multidimensional case.

In the last section, some generalizations of problems studied in the previous sections are considered briefly. In particular, the metastable behavior of the perturbed system on time intervals growing together with $\delta^{-1}$ and some non-Hamiltonian systems with a first integral, briefly considered there.

Small deterministic perturbations of stochastic systems are also considered briefly in Section 5. If the stochastic systems has a first integral $H(x)$, the long-time evolution of the perturbed system can be described by a stochastic process on the Reeb graph related to $H(x)$. This stochastic process is deterministic inside the edges and has a stochastic behavior at some interior vertices.

Finally, a general remark: as was explained in \cite{bib4}, one of the main characteristics of the long-time evolution of a perturbed system is given by the limiting motion (in an appropriate time scale) of the projection of perturbed process on the simplex $\mathcal{M}$ of invariant probability measures of the non-perturbed system. First integrals are, sometimes but not always, a convenient tool to describe this simplex. For instance, no smooth first integral can be introduced for systems with several asymptotically stable regimes \cite{bib9} or for some area-preserving systems on torus \cite{bib3}. In our case, the points of the graph parametrize the ergodic measures of the system -- extreme points of $\mathcal{M}$. The Markov process $\mathcal{Y}(t)$ induces a dynamical system on $\mathcal{M}$. % For instance, if we consider perturbations of system (\ref{eq.e3}) on two-dimensional torus $\tau^2$ )this means that the vector field $\overline{\nabla}H(x)$ is a smooth field on $\tau^2$, but $H(x)$ is, in general, a many-valued function) then, in generic case, extreme points of $\mathcal{M}$ can be parametrized by a graph, but one of the vertices will correspond to the invariant measure concentrated on an open in $\tau^2$ set. The limiting solution of the perturbed system (\ref{eq.e5}) in this case will have an ``additive stochasticity'' besides the stochastic exit from some interior vertices: the limiting motion on the graph will spend an exponentially distributed random time at the special vertex before leaving it [DFK]. Such Hamiltonian systems with one degree of freedom can be considered on any 2-dimensional orientable compact manifold. Then the number of special vertices where the limiting slow motion spends exponentially distributed time, in the generic case, is equal to the genus of the manifold.

\section{Structure of the Reeb Graph}

Let $H(x), x \in \R^d,$ be a twice continuously differentiable function and $\lim_{\lvert x \rvert \rightarrow \infty}H(x) = \infty$. Assume that $H(x)$ has a finite number of critical points $O_1, ..., O_m$ and that they are non-degenerate. The latter means that the Hessian matrix $\left( \frac{\partial^2 H}{\partial x_i \partial x_j} (O_k) \right)$ is a non-degenerate $d \times d$ matrix. Moreover, let each level set of $H(x)$ has a finite number of connected components. Such functions are called \emph{Morse functions}. 

Consider the level set $C(z) = \{x \in \R^d : H(x) = z\}$. Let $n(z)$ be the number of connected components $C_k(z)$ of $C(z): C(z) = \cup_{k=1}^{n(z)} C_k(z), C_i(z) \cap C_j(z) = \emptyset$ if $i \neq j$. Identify points of each connected component $C_k(z)$ for various $z$. The set obtained after such an identification is homeomorphic to a graph $\Gamma = \Gamma_H$. This graph is called the \emph{Reeb graph} of $H$. Vertices of the graph correspond to the critical points. We write $I \sim O_k$ if an edge $I \subset \Gamma$ is attached to the vertex $O_k$. The number of edges attached to a vertex $O_k$ is called the order of vertex $O_k$. The Reeb graphs arise in the various branches of mathematics (see \cite{bib10} and references there). In particular, the Reeb graphs are useful in asymptotic problems related to the averaging principle \cite{bib9}. To the best of our knowledge, graph $\Gamma_H$ was introducted first in the paper of Adelson-Velskii, and Krourod \cite{bib1} in 1945. The paper of Reeb was published a bit later \cite{bib12}.

Let $Y:\R^2 \rightarrow \Gamma$ be the identification map: $Y(x)$ is the point of $\Gamma$ corresponding to the level set component containing the point $x \in \R^2$. We can number the edges of $\Gamma$ and introduce a coordinate system on $\Gamma$: coordinates of a point $y \in \Gamma$ is a pair $(H,i)$, where $H$ is the value of the function $H(x)$ on $Y^{-1}(y)$, and $i$ is the number of the edge containing $y$.

If $H(x)$ is a function on a Euclidean space $\R^d$, the Reeb graph $\Gamma$ is a tree. Since we assume that $\lim_{\lvert x \rvert \rightarrow \infty} = \infty$, we draw the tree with the root up (see Figure \ref{fig2}). Let $k = k(O)$ where $O$ is one of the critical points of $H(x)$ be the number of negative eigenvalues of the Hessian, $d - k$ is the number of positive eigenvalues. If $k(O) = 0$ or $k(O) = d$ the point $O$ is a local minimum or maximum of $H(x)$ respectively, and the order of the corresponding vertex of $\Gamma_H$ is 1. If $1 \leq k(O) \leq d-1, d > 1$, then $O$ is a saddle point. It is shown in \cite{bib7} that, in the case $1 < k(O) < d-1$, the corresponding vertex has order 2: coordinate $H$ is increasing along one of the edges attached to $O$ and decreasing along another one \cite{bib7}. If $k(O) = 1$ or $k(O) = d-1$, the order of $O \in \Gamma$ can be 2 or 3: any small enough neighborhood of $O \in \R^d, d > 1$ is divided by the surface $C(H(O))$ in three connected parts. Say, if there is just one negative eigenvalue of the Hessian at $O$, then the neighborhood of $O$ has two parts where $H(x) < H(0)$ and one part where $H(x) > H(0)$. But in the case $d > 2$, the parts of the neighborhood where $H < H(0)$ can unite outside of the neighborhood of $O$ (Figure 1a), then the corresponding vertex has order 2. If the parts are not united (Figure 1b), the corresponding vertex has order 3: at one edge $H > H(0)$ and two edges with $H < H(0)$ (see \cite{bib7}).

\begin{figure}[t]
	\center{\includegraphics[scale=0.25]{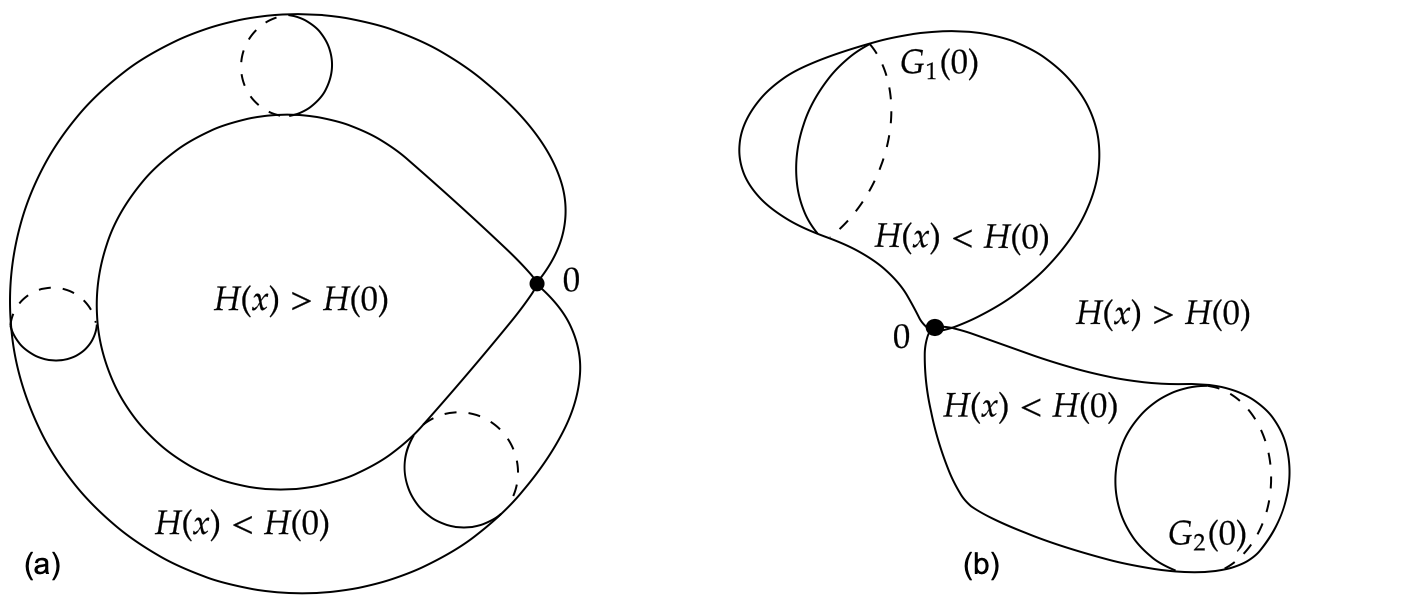}}
    \caption{\label{fig.d1}}
\end{figure}

We will say that a vertex $O \subset \Gamma$ has the type $\frac{i}{j}$ if $i$ edges with $H > H(0)$ and $j$ edges with $H < H(0)$ are attached to 0. For $d > 1$ every vertex 0 of $\Gamma$ belongs to one of five types: $\frac{1}{0},\frac{0}{1},\frac{1}{1},\frac{1}{2},$ and $\frac{2}{1}$. Vertices of type $\frac{1}{0}$ correspond to local minimum points of the function $H$, and $\frac{0}{1}$ to local maximum points; and vertices of types $\frac{1}{1}, \frac{1}{2}, $ and $\frac{2}{1}$ correspond to saddle points of $H$.

We will see that just vertices of type $\frac{1}{2}$ and $\frac{2}{1}$ can lead to the stochasticity of system $(5)$, and this depends on the behavior of the perturbation on the level sets containing the critical points of type $\frac{1}{2}$ and $\frac{2}{1}$. The level set component containing such a point $O$ consist of two parts attached to the vertex (see Figure 1b). If $O$ is an equilibrium of $\frac{1}{2}$ type, $H(x) < H(0)$ for $x \in G_1 \cup G_2$, and $H(x) > 0$ outside of the closure of $G_1 \cup G_2$. If $O$ is a point of $\frac{2}{1}$ type, $H(x) > H(0)$ inside $G_1 \cup G_2$ and $H(x) < H(0)$ outside the closure of $G_1 \cup G_2$.

\begin{figure}[h]
	\center{\includegraphics[scale=0.4]{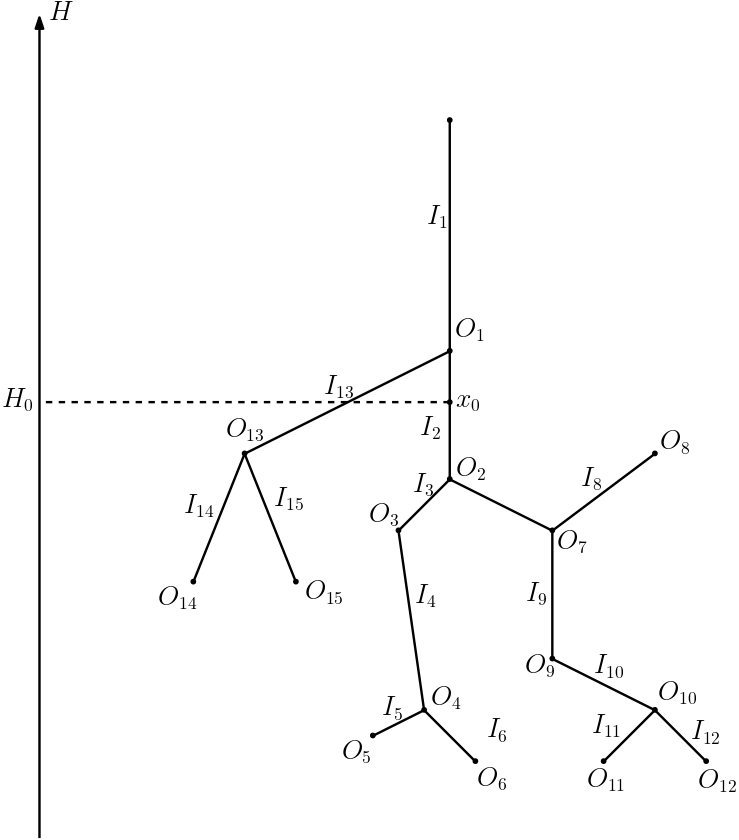}}
    \caption{\label{fig2} The graph $\Gamma$ in Figure 2 consists of 15 edges and 15 vertices. The vertex $O_8$ is of $\frac01$ type and $O_5, O_6, O_{11}, O_{12}, O_{14}, O_{15}$ are of $\frac10$ type. They correspond, respectively, to maximum and minima of $H(x)$. The rest of the vertices correspond to the saddle points of $H(x)$: $O_3$ and $O_9$ are of $\frac11$ type, $O_1, O_2, O_4, O_{10}, O_{13}$ are of $\frac12$ type and $O_7$ is of $\frac21$ type.}
\end{figure}

We say that a vertex $O$ is \emph{essential} for system (5) if it is of $\frac{1}{2}$ type and $\int_{G_i(O)} \text{div} b(x) dx < 0$, $i = 1,2$, or if $O$ is of $\frac{2}{1}$ type and $\int_{G_i} \text{div} b(x) dx > 0$, $i=1,2$.

The following three propositions can be useful to understand the structure of the Reeb graph for a Morse function $H(x), x \in \R^d, \lim_{\lvert x \rvert \rightarrow \infty} H(x) = \infty$.

\begin{proposition}
	Let $\Gamma = \Gamma_H$ be the Reeb graph of a Morse function $H(x), x \in \R^d, \lim_{\lvert x \rvert \rightarrow \infty} H(x) = \infty, d > 1$. Then $\Gamma$ is a tree. Each vertex of $\Gamma$ has order 1,2 or 3 and has one of the five types $\frac{0}{1}, \frac{1}{0}, \frac{1}{1}, \frac{1}{2}$, and $\frac{2}{1}$.
\end{proposition}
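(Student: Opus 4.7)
The plan is to establish the three assertions (tree, order at most $3$, five types) by combining a global topological argument for the tree property with the local Morse model at each critical point.

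First I would show $\Gamma$ is a tree. Connectedness of $\Gamma$ follows from connectedness of $\R^d$ together with continuity of the identification map $Y$. For the absence of cycles, I would argue as follows: since $\lim_{|x|\to\infty} H(x)=\infty$, every sublevel set is compact, and $\R^d$ is contractible, hence simply connected. Any closed loop $\gamma$ in $\Gamma$ can be lifted to a continuous curve in $\R^d$ whose endpoints lie in the same connected level component $C_k(z)$; concatenating with a path inside $C_k(z)$ produces a closed loop in $\R^d$, which must be null-homotopic. Pushing this null-homotopy down through $Y$ then contracts $\gamma$ in $\Gamma$. Since this holds for every loop, $\Gamma$ has trivial fundamental group, and being a one-dimensional CW complex it is a tree.

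Next I would analyze the neighborhood of each vertex using the Morse Lemma. At a critical point $O$ of index $k$ (the number of negative eigenvalues of the Hessian), there are coordinates in which
\begin{equation*}
H(x)-H(O) = -x_1^2-\cdots-x_k^2+x_{k+1}^2+\cdots+x_d^2.
\end{equation*}
Removing the level surface from a small ball around $O$ yields the sets $\{r>s\}$ and $\{s>r\}$ with $r^2=x_1^2+\cdots+x_k^2$ and $s^2=x_{k+1}^2+\cdots+x_d^2$. A direct check shows that $\{r>s\}$ is connected iff $k\ge 2$ and $\{s>r\}$ is connected iff $d-k\ge 2$. Consequently: for $k=0$ (local min) or $k=d$ (local max) there is a single local component and the vertex has order $1$; for $2\le k\le d-2$ both sides are locally connected, giving order exactly $2$ and type $\tfrac{1}{1}$; for $k=1$ the side $\{H<H(O)\}$ has two local components and $\{H>H(O)\}$ has one, while for $k=d-1$ the counts are reversed.

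Finally I would pass from the local to the global count. Globally, the two local components of $\{H<H(O)\}$ arising from $k=1$ either merge outside the ball into a single connected set (producing an order-$2$ vertex of type $\tfrac{1}{1}$) or remain distinct (producing an order-$3$ vertex of type $\tfrac{1}{2}$), with the analogous dichotomy $\tfrac{1}{1}$ vs.\ $\tfrac{2}{1}$ for $k=d-1$. Collecting the possibilities gives exactly the five admissible types $\tfrac{0}{1},\tfrac{1}{0},\tfrac{1}{1},\tfrac{1}{2},\tfrac{2}{1}$ and orders in $\{1,2,3\}$. The main obstacle is the tree property: the local Morse analysis is routine, but ruling out cycles requires the global input that $\R^d$ is simply connected and that level sets project consistently — essentially the classical fact that the Reeb graph of a Morse function on a simply connected manifold with compact sublevel sets is a tree, which I would invoke (or reprove as above) citing the structural results already used from \cite{bib7}.
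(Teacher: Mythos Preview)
Your proposal is correct and aligned with the paper's argument, but the paper's own proof is far terser: it simply invokes the inequality $\beta_1(\Gamma)\le\beta_1(\R^d)=0$ for the tree property and defers the entire vertex-type classification to \cite{bib7}, without reproducing any of the Morse-lemma analysis. What you have written is essentially an unpacking of both citations. Two small remarks on the comparison: first, the paper phrases the tree argument via first Betti numbers rather than $\pi_1$; for a graph these coincide, but the homological formulation avoids your path-lifting step, which is not automatic for a general quotient map (you would need the Morse structure to justify that a loop in $\Gamma$ lifts, and that the null-homotopy in $\R^d$ pushes forward --- you rightly flag this as the main obstacle and propose citing the classical result, which is exactly what the paper does). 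Second, your local analysis of the index-$k$ critical point and the dichotomy ``merge vs.\ stay separate'' for $k=1$ or $k=d-1$ is precisely the content the paper attributes to \cite{bib7} and discusses informally around Figure~1; so your second half reproduces that reference rather than adding a new route.
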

\begin{proof}
	The first statement of the proposition is well known: it follows from the fact that the first Batty number $\beta_1(\Gamma)$ of the set $\Gamma$ is less than or equal to the first Batty number of $\R^d$. The second statement is proved in \cite{bib7}. Note that the Reeb graph of a Morse function on a space with more sophisticated topology than $\R^d$, for instance, on a torus, is not necessarily a tree.
\end{proof}

\begin{proposition}
	Let $\Gamma = \Gamma_H$ be the Reeb graph of a Morse function $H(x), x \in \R^d, \lim_{\lvert x \rvert \rightarrow \infty} H(x) = \infty$. Then,
	\begin{enumerate}
		\item[(a)] the number of vertices $\# (\frac{1}{2}, \Gamma)$ of the type $\frac{1}{2}$ in $\Gamma$ is equal to the number $\# (\frac{1}{0}, \Gamma)$ of the vertices of type $\frac{1}{0}$ in it, minus 1 ($\# (\frac{1}{2}, \Gamma)$ is equal to the number of local minima of $H(x)$ minus 1);
		\item[(b)] the number $\# (\frac{2}{1}, \Gamma)$ of vertices of the types $\frac{2}{1}$ in $\Gamma$ is equal to the number of local maxima $\#(\frac{0}{1}, \Gamma)$ of $H(x)$.
	\end{enumerate}
\end{proposition}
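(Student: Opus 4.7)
The plan is to combine the tree structure from Proposition 1 with a degree-counting argument obtained by orienting each edge of $\Gamma$ in the direction of increasing $H$. By definition, a vertex of type $\frac{i}{j}$ has $i$ outgoing edges (along which $H > H(O)$) and $j$ incoming edges (along which $H < H(O)$). Writing $a, b, c, p, q$ for $\#(\frac{1}{0},\Gamma)$, $\#(\frac{0}{1},\Gamma)$, $\#(\frac{1}{1},\Gamma)$, $\#(\frac{1}{2},\Gamma)$, $\#(\frac{2}{1},\Gamma)$ respectively, the total out-degree summed over all vertices is $a+c+p+2q$ and the total in-degree is $b+c+2p+q$.

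Next, I split the edges of $\Gamma$ into $B$ bounded edges and $U$ unbounded edges; the latter extend toward $H=\infty$ and correspond to the connected components of $\{H=z\}$ for $z$ exceeding all critical values of $H$. Each bounded edge contributes one out-incidence and one in-incidence, while each unbounded edge contributes only an out-incidence at its unique finite endpoint. Hence
\[
a + c + p + 2q = B + U, \qquad b + c + 2p + q = B.
\]
Adjoining a virtual vertex at each unbounded end turns $\Gamma$ into a finite tree with $V+U$ vertices and $B+U$ edges, where $V = a+b+c+p+q$; the Euler relation for a tree then yields $B = V-1$.

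The main topological step is to prove $U=1$. Pick $z$ strictly greater than all critical values of $H$. Since $H \to \infty$ at infinity, there exists $R>0$ with $\mathbb{R}^d \setminus B(0,R) \subset \{H > z\}$; for $d>1$ this complement of a ball is connected, so it lies in a single component of $\{H \ge z\}$. Any other component would be bounded and thus compact; since $z$ is a regular value it would contain interior points of $\{H > z\}$, and the maximum of $H$ attained on it would occur at an interior critical point of $H$ with value $\ge z$, contradicting the choice of $z$. Therefore $\{H \ge z\}$ is connected. On the other hand, the vector field $\nabla H/|\nabla H|^{2}$ is smooth and nonvanishing on $\{H\ge z\}$ and generates a flow that identifies $\{H=z\}\times[0,\infty)$ diffeomorphically with $\{H\ge z\}$; hence $\{H=z\}$ has the same number of components as $\{H\ge z\}$, namely one.

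With $U=1$ and $B=V-1$ substituted in, the in-degree identity simplifies to $p = a-1$, which is (a); the out-degree identity then simplifies to $q = b$, which is (b). The only non-elementary ingredient is the topological statement $U=1$; the remainder is a routine linear count on a finite tree.
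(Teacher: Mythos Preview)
Your proof is correct and takes a genuinely different route from the paper. The paper argues by induction on the number of edges: it locates the vertex $O$ at the global minimum of $H$, deletes the edge rising from $O$, and tracks how the counts $\#(\frac{1}{0},\cdot)$ and $\#(\frac{1}{2},\cdot)$ change according to whether the upper endpoint of that edge is of type $\frac{1}{1}$, $\frac{1}{2}$, or $\frac{2}{1}$ (the $\frac{0}{1}$ case cannot occur once there is more than one edge, since two leaves joined by an edge would force the tree to be that single edge). Your argument instead orients the edges by increasing $H$ and balances in- and out-degrees against the edge count, invoking the tree relation $B=V-1$ and the fact $U=1$. The induction is more hands-on and avoids any auxiliary topology, but requires a small case analysis; your counting argument is cleaner and makes transparent \emph{why} the asymmetry between (a) and (b) appears---it is precisely the single unbounded edge, which contributes an outgoing incidence but no incoming one. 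The paper effectively uses $U=1$ as well (in the remark that ``there is exactly one edge with no upper end'') but does not justify it; your gradient-flow argument for this point is a nice addition.
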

\begin{proof}
	(a) Let us prove our statement by induction in the number of edges. If the number of edges is equal to 1, the lower vertex of this edge is of type $\frac{1}{0}$, and there are no vertices of type $\frac{1}{2}$: $\#(\frac{1}{2}, \Gamma) = 0 = \# (\frac{1}{0}, \Gamma) - 1$. 
	
	Now suppose that our statement is true for all graphs with less than $n$ edges. Let the graph $\Gamma$ have $n$ edges. Let $O$ be the vertex corresponding to the minimum of $H(x)$. Then $O$ is a vertex of the type $\frac{1}{0}$. Let $I$ be the edge whose lower end is $O$.
	
	There are five possibilities: the edge $I$ has no vertex being its upper end (the edge goes to infinitely large values of $H$; or it has as its upper end a vertex $o_1$ of the type $\frac{1}{1}, \frac{1}{2}$, or $\frac{2}{1}$.
	
	In the first case, there is only one edge in the graph $\Gamma$. This case is already considered. In the case of $o_1$ being of the type $\frac{1}{1}$, we delete from the graph the edge $I$ together with its lower end $o$, obtaining a graph $\Gamma^1$ with a smaller number of edges. Since $\#(\frac{1}{0}, \Gamma) = \#(\frac{1}{0}, \Gamma^1$ and $\# (\frac{1}{2}, \Gamma) = \# (\frac{1}{2}, \Gamma^1)$ our statement is true.
	
	If $o_1$ is of the type $\frac{1}{2}$, we also delete from $\Gamma$ the edge $I$ with its lower end $O$, obtaining the graph $\Gamma^1$. The vertex $O_1$ goes from type $\frac{1}{2}$ to the type $\frac{1}{1}$ in the graph $\Gamma^1$; $\# (\frac{1}{2}, \Gamma) = \# (\frac{1}{2}, \Gamma^1) + 1$, $\# (\frac{1}{0}, \Gamma) = \# (\frac{1}{0},\Gamma^1) + 1$, and our statement is true for the graph $\Gamma$.
	
	Finally, suppose $O_1$ is of the type $\frac{2}{1}$. Let $I_1$ and $I_2$ be the edges coming to the vertex $O_1$ from above. Delete the edge $I$ together with its lower end $O$ and split the vertex $O_1$ in two: the vertex that we will still denote $O_1$ which we attach as its lower end to the edge $I_1$ and $O_2$ attached as the lower end to the edge $I_2$. We get two disconnected tree graphs: $\Gamma^1$ containing $I_1$ and $O_1$, and $\Gamma^2$ containing $I_2$ and $O_2$ (these graphs are disconnected because the graph $\Gamma$ was a tree). We have: $\#(\frac{1}{2}, \Gamma) = \# (\frac{1}{2}, \Gamma^1) + \# (\frac{1}{2}. \Gamma^2)$, $\# (\frac{1}{0}, \Gamma) = \# (\frac{1}{0}, \Gamma^1) + \# (\frac{1}{0}, \Gamma^2) + 1$, $\#(\frac{1}{2}, \Gamma^i) = \# (\frac{1}{0}, \Gamma^i) - 1$ for $i=1,2$ and thus $\# (\frac{1}{2}, \Gamma) = \# (\frac{1}{0}, \Gamma) - 1$.
	
	The proof of (b) is the same except we have to take into account that there is exactly one edge with no upper end (corresponding to values of $H$ going to infinity). 
\end{proof}

\begin{proposition}
	Let $h(p), p \in \R^{\ell}$, and $F(q), q \in \R^{d-\ell}$ be Morse functions. Let $H(p,q) = F(q) + h(p)$. Assume that $\min_{p \in \R^{\ell}} h(p) = h(p^*) = 0$. Then the critical point $(p^*, q^*)$ corresponding to a vertex $O \in \Gamma_H$ of $\frac{1}{2}$ type if and only if $q^* \in \R^{d - \ell}$ corresponds to $\frac{1}{2}$ type vertex of $\Gamma_F$.
\end{proposition}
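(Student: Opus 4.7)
The plan is to reduce the claim to two parts: a local Morse-index computation and a global connectivity argument exploiting the product structure $H = F + h$. The key tools for the global part will be the projection $\pi(p,q) = q$ and the section $\sigma(q) = (p^*, q)$, which transport paths between the sublevel sets of $H$ and $F$ in opposite directions.

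First I would verify the index statement. Since $\nabla H(p,q) = (\nabla h(p), \nabla F(q))$, the critical points of $H$ are products of critical points of $h$ and $F$, and the Hessian at $(p^*,q^*)$ is block-diagonal with blocks $\mathrm{Hess}\,h(p^*)$ and $\mathrm{Hess}\,F(q^*)$. Because $p^*$ is a global minimum of the Morse function $h$, the first block is positive definite, so the Morse index of $H$ at $(p^*,q^*)$ equals that of $F$ at $q^*$. A type-$\frac{1}{2}$ vertex requires Morse index $1$, so the index conditions on the two sides match. Applying the Morse lemma to $h$ at $p^*$ and to $F$ at $q^*$ separately yields a joint chart in which, writing $c := H(p^*,q^*) = F(q^*)$, one has $H - c = |\tilde{p}|^2 - \tilde{q}_1^2 + \sum_{i \ge 2} \tilde{q}_i^2$ and $F - c = -\tilde{q}_1^2 + \sum_{i \ge 2}\tilde{q}_i^2$. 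Hence $\{H < c\}$ and $\{F < c\}$ each split locally into two components $U_\pm$ and $V_\pm$ distinguished by the sign of $\tilde{q}_1$, and by construction $\sigma(V_\pm) \subset U_\pm$.

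It remains to establish that $U_+$ and $U_-$ lie in distinct global components of $\{H < c\}$ iff $V_+$ and $V_-$ lie in distinct global components of $\{F < c\}$. Since $h \ge 0$, the map $\pi$ sends $\{H < c\}$ into $\{F < c\}$, so any path joining $U_+$ to $U_-$ in $\{H < c\}$ projects to a path joining $V_+$ to $V_-$ in $\{F < c\}$. Since $h(p^*) = 0$, the map $\sigma$ sends $\{F < c\}$ into $\{H < c\}$, so any path joining $V_+$ to $V_-$ in $\{F < c\}$ lifts through $\sigma$ to one joining $U_+$ to $U_-$ in $\{H < c\}$. Combined with the index match and the characterization of type $\frac{1}{2}$ (Morse index $1$ together with non-unification of the two local lower pieces) from the discussion preceding the proposition, this yields the equivalence.

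The main obstacle I anticipate is bookkeeping rather than conceptual: one must choose Morse charts respecting the product decomposition (so that the labelings $U_\pm$ and $V_\pm$ are compatible under both $\pi$ and $\sigma$, not swapped) and fix neighborhoods small enough that the local picture faithfully captures the local components of the sublevel sets. Both become straightforward once the block-diagonal structure of the Hessian is used to build the joint chart separately on the $p$- and $q$-factors.
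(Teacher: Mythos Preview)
Your proposal is correct and follows essentially the same approach as the paper: use the projection $\pi(p,q)=q$ (relying on $h\ge 0$) to show that connectivity of the $H$-sublevel pieces forces connectivity of the $F$-sublevel pieces, and the section $\sigma(q)=(p^*,q)$ (relying on $h(p^*)=0$) for the converse. The paper organizes the two directions as contrapositives and leaves the section step implicit (``This implies that $\mathcal{E}$ is also connected''), whereas you make both maps and the local $U_\pm/V_\pm$ labeling explicit; your version is a cleaner rendering of the same argument.
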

\begin{proof}
	It is clear that if $p^*$ is a minimum point of $h(p)$ and $q^*$ corresponds to a vertex of $\Gamma_F$ of $\frac{1}{2}$ type, then just one eigenvalue of the matrix of second derivatives of $H(p,q)$ at the point $(p^*, q^*)$ is negative. Therefore, the component $\mathcal{E}$ of the set $\{(p,q) \in \R^d : H(p,q) < H(p^*,q^*)\}$ attached to $(p^*, q^*)$ consists of one or two connected components attached to the point $(p^*, q^*)$ and the vertex of $\Gamma_H$ corresponding to $(p^*, q^*)$ is of $\frac{1}{1}$ or $\frac{1}{2}$ type (see Figures 1a and 1b). If $\mathcal{E}$ consists of just one component, any two point $A,B \in \mathcal{E}$ can be connected by a continuous curve $\gamma = \gamma_{AB}$ situated inside $\mathcal{E}$. The projection $\tilde{ \gamma }$ of $\gamma$ on the space $\R^{d - \ell}$ connects the projections $A$ and $B$ on $\R^{d - \ell}$. Due to our assumptions on $h(p)$, $\tilde{ \gamma }$ belongs to the component of $\{ q \in \R^{d - \ell} : F(q) < F(q^*) \}$ attached to $q^* \in \R^{d - \ell}$. Thus, if $q^* \in \R^{d-\ell}$ corresponds to $\frac{1}{2}$ type vertex of $\Gamma_F$, the point $(p^*,q^*)$ corresponds to $\frac{1}{2}$ type vertex of $\Gamma_H$.
	
	On the other hand, if $(p^*, q^*)$ corresponds to a $\frac{1}{2}$ type vertex of $\Gamma_H$, then $q^*$ corresponds to a $\frac{1}{2}$ type vertex of $\Gamma_F$: if this is not true, then $q^*$ corresponds to a $\frac{1}{1}$ type vertex of $\Gamma_F$ and the component of $\{ q \in \R^{d - \ell} : F(q) < F(q^*) \}$ attached to a point $q^* \in \R^{d-\ell}$ is a connected set. This implies that the $\mathcal{E}$ is also connected what contradicts to our assumption that $(p^*, q^*)$ corresponds to a $\frac{1}{2}$ type vertex.
\end{proof}

\begin{corollary}
		If $H(p,q) = \frac{\vert p \vert^2}{2} + F(q)$ and $F(q)$ is a Morse function, $\lim_{\vert q \vert \rightarrow \infty} F(q) = \infty$, then $(p^*, q^*)$ corresponds to a $\frac{1}{2}$ type vertex if and only if $p^* = 0$ and $q^*$ corresponds to $\frac{1}{2}$ type vertex of $\Gamma_F$.
\end{corollary}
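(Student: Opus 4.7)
The plan is to reduce the corollary to a direct application of Proposition 3 with the choice $h(p) = \tfrac{1}{2}|p|^2$. First I would verify that this $h$ satisfies the hypotheses of Proposition 3: it is a Morse function on $\R^{\ell}$, since its unique critical point is $p=0$ and the Hessian there is the identity matrix (in particular non-degenerate), and its global minimum is attained at $p^{*}=0$ with value $h(0)=0$, so the normalization $\min h = h(p^{*}) = 0$ required by Proposition 3 holds. The Morse assumption on $F$ and the coercivity $\lim_{|q|\to\infty}F(q)=\infty$ are part of the hypotheses of the corollary, and together with the coercivity of $h$ they yield $\lim_{|(p,q)|\to\infty}H(p,q)=\infty$, which is needed for $\Gamma_H$ to be a tree whose vertices fall into the five types listed in Proposition 1.

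Next I would explain why the necessity of $p^{*}=0$ is automatic: critical points of $H(p,q)=\tfrac{1}{2}|p|^2+F(q)$ are exactly those points where $\nabla_p H = p = 0$ and $\nabla_q H = \nabla F(q) = 0$. Hence every critical point of $H$ has the form $(0,q^{*})$ with $q^{*}$ a critical point of $F$, and conversely. The Hessian of $H$ at $(0,q^{*})$ is block-diagonal with blocks $I_{\ell}$ and $\mathrm{Hess}\,F(q^{*})$, so non-degeneracy transfers from $F$ to $H$, and the number of negative eigenvalues of $\mathrm{Hess}\,H$ at $(0,q^{*})$ equals the number of negative eigenvalues of $\mathrm{Hess}\,F$ at $q^{*}$.

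With these preparations, Proposition 3 applied to $h(p)=\tfrac{1}{2}|p|^2$ and the given $F$ yields immediately the equivalence: $(0,q^{*})$ corresponds to a vertex of $\Gamma_H$ of type $\frac{1}{2}$ if and only if $q^{*}$ corresponds to a vertex of $\Gamma_F$ of type $\frac{1}{2}$. The genuinely delicate point here — whether the two components of the sublevel set $\{F<F(q^{*})\}$ attached to $q^{*}$ in $\R^{d-\ell}$ remain separate once one lifts to the component of $\{H<H(0,q^{*})\}$ in $\R^{d}$, or whether they could merge through the extra $p$-coordinates — is exactly what Proposition 3 settles via the curve-projection argument (any connecting arc in $\R^d$ projects to a connecting arc in $\R^{d-\ell}$ staying in $\{F<F(q^{*})\}$). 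Since Proposition 3 is available, no additional topological bookkeeping is required, and the corollary follows. The only step worth double-checking is that the normalization $h(p^{*})=0$ in Proposition 3 is not used in a way that restricts the conclusion — a quick inspection of its proof shows it is purely a convenient shift of $H$ by a constant — so the reduction is clean.
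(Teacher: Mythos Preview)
Your proposal is correct and matches the paper's approach: the corollary is stated immediately after Proposition 3 with no separate proof, so it is intended as the direct specialization $h(p)=\tfrac{1}{2}|p|^2$, exactly as you carry out. Your added observations (that every critical point of $H$ forces $p^*=0$, and that the block-diagonal Hessian transfers non-degeneracy and the index) simply make explicit what the paper leaves implicit.
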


\section{Convergence to a Diffusion Process on $\Gamma$}
Let $H(x), x = (p,q) \in \mathbb{R}^{2n}$, be a three times continuously differentiable Morse function, with $\lim_{\lvert x \rvert \rightarrow \infty} H(x) = \infty$. Let $a^{(1)}{(x)}$ be a non-negative definite $2n \times 2n$ matrix with twice differentiable entries and bounded derivatives. Let $a^{(2)}(x), x \in \R^{2n}$ be a positive definite $2n \times 2n$-matrix with entries having continuous and bounded second derivatives, $b(x)$ and $\beta(x)$ are bounded together with their derivatives

Assume the Hamiltonian $H(x)$ and the matrix $a^{(1)}(x)$ satisfy the conditions:
\begin{enumerate}
    \item[(A1)] $H(x) \geq c_1 \mid x \mid^2, \mid \nabla H(x) \mid \geq c_2 \mid x \mid, \mid \Delta H(x) \mid \geq c_3$ for sufficiently large $\mid X \mid$, where $c_1, c_2, c_3$ are positive constants
	\item[(A2)] Each connected level set component of $H(x)$ contains at most one critical point.
	\item[(A3)] $a^{(1)}(x) \nabla H(x) = 0$ for all $x \in \mathbb{R}^{2n}$
	\item[(A4)] $\lambda_1(x) \mid e \mid^2 \leq (a^{(1)} e, e) \leq \lambda_2(x) \mid e \mid^2$ for each $e \perp \nabla H(x)$, where $\lambda_1(x) > 0$ if $\nabla H(x) \neq 0$, $\lambda_2(x) < K < \infty$ for all $x$ from a bounded domain $D \in \mathbb{R}^{2n}$ containing all critical points of $H(x)$: if $x_0$ is a critical point of $H(x)$, then there exists constants $k_1, k_2 > 0$ such that $\lambda_1(x) \geq k_1 \mid x-x_0 \mid^2, \lambda_2(x) \leq k_2 \mid x-x_0 \mid^2$ for all $x$ from a neighborhood of $x_0$
	\item[(A5)] Let $\lambda_{k i}$ for $i = 1,2, \dots, 2n$ be the eigenvalues of the Hessian 
	$\left( \frac{\partial^2 H(x)}{\partial x_i \partial x_j} \right)$ 
	at the critical points $x_k$ for $k=1, \dots , m$ and $\lambda^* = \max_{k,i} \lambda_{k i}$. Assume that $\varkappa < (K \lambda^*)^{-1}$.
	% \item[(A6)] Let $a^{(2)}(x), x \in \mathbb{R}^{2n}$, be a positive definite $2n \times 2n$ matrix with entries having continuous and bounded second derivatives, $b(x)$ and $\beta(x)$ be bounded together with their derivative vector fields.
\end{enumerate}

Denote by $X^{\varepsilon}(t) = X^{\varepsilon, \varkappa, \delta}(t)$ the diffusion process in $\mathbb{R}^{2n}$ governed by the operator $L^{\varepsilon, \varkappa, \delta}$:
\begin{align*}
	L^{\varepsilon, \varkappa, \delta} u(x) = \frac{1}{\varepsilon}\left[ \overline{\nabla} H(x) \cdot \nabla u(x) + \frac{\varkappa}{2} \text{div}(a^{(1)}(x) \nabla u(x)) \right] + b(x) \cdot \nabla u(x) \\
	+ \delta \left[ \frac{1}{2} \text{div} (a^{(2)}(x) \nabla u(x)) + \beta(x) \cdot \nabla u(x) \right] \\
	= \frac{1}{\varepsilon}L_1 u(x) + b(x) \cdot \nabla u(x) + \delta L_2 u(x)
\end{align*}
where $\varepsilon, \varkappa, \delta$ are positive constants.

The process $X^{\varepsilon}(t)$ can be described by the stochastic differential equation
\begin{align}
	\dot{X}^{\varepsilon}(t) = \frac{1}{\varepsilon} \overline{\nabla} H(X^{\varepsilon}(t)) + \sqrt{\frac{\varkappa}{\varepsilon}} \sigma^{(1)}(X^{\varepsilon}(t)) \dot{W}^{(1)}_t + b(X^{\varepsilon}(t))  \notag \\
	+ \sqrt{\delta} \sigma^{(2)}(X^{\varepsilon}(t)) \dot{W}^{(2)}_t + \delta(\tilde{\beta}(X^{\varepsilon}(t)) + \beta(X^{\varepsilon}(t))) \label{eq7}
\end{align}
Here, $W_t^{(1)}$ and $W_t^{(2)}$ are independent Wiener processes in $\mathbb{R}^{2n}$, with $(\sigma^{(1)}(x))^2 = a^{(1)}(x), (\sigma^{(2)}(x))^2 = a^{(2)}(x)$,
\begin{align*}
	\tilde{b}(x) = (\tilde{b}_1(x), \dots, \tilde{b}_{2n}(x)), \quad \tilde{b}_k(x) = \sum_{i=1}^{2n} \frac{\partial a_{ik}^{(1)}(x)}{\partial x_i}, \\
	\tilde{\beta}(x) = (\tilde{\beta}_1 (x), \dots, \tilde{\beta}_{2n}(x)), \quad \tilde{\beta}_k(x) = \sum_{i=1}^{2n} \frac{\partial a_{ik}^{(2)}(x)}{\partial x_i}.
\end{align*}
Note that, if a non-negative definite matrix $a(x)$ has bounded second derivatives, there exists a Lipschitz 
continuous square root of $a(x)$. We can thus assume that $\sigma^{(1)}(x)$ and $\sigma^{(2)}(x)$ are Lipschitz continuous.

The process $X^{\varepsilon}(t) = X^{\varepsilon, \varkappa, \delta}(t)$ for $\varepsilon \ll 1$, $\varkappa$ and $\delta$ fixed, has a fast and slow components. Let $\Gamma = \Gamma_H$ be the Reeb graph for $H(x), Y(X)$ be the corresponding projection of $\R^{2n}$ on $\Gamma$, and $X^{\ve}(0) = x \in Y^{-1}(z,k)$ where $(z,k)$ is an interior point of an edge $I_k \subset \Gamma$. Then, the fast component of $X^{\ve}(t)$ on a small time interval can be, in a sense, approximated by the diffusion process on the manifold $C_k(z) = Y^{-1}(z,k)$ governed by the operator $\frac{1}{\ve}L_1$. This diffusion process has the unique invariant probability measure with the density. % The fast component, due to our assumption (A3) and identity $\nabla H(x) \cdot \overline{\nabla}H(x) = 0$, is the diffusion process on the corresponding level set component $C_k(z), z = H(x), x = X^{\varepsilon, \varkappa, \delta}_0$, governed by the operator $\frac{1}{\varepsilon}L_1$. If $C_k(z)$ does not contain critical points, this process has on $C_k(z)$ a unique invariant probability measure with the density
$$m_{z,k}(x) = \left( \oint_{C_k(z)} \frac{dV}{\mid \nabla H(y) \mid} \mid \nabla H(x) \mid \right)^{-1}$$
with respect to the volume element $dV$ on $C_k(z)$. If $C_k(z)$ contains a critical point, the invariant measure is also unique and is concentrated at the critical point.

The slow component of $X^{\varepsilon}(t)$ changes on the Reeb graph $\Gamma_H$ corresponding to the Hamiltonian. This slow component is
$$\Y^{\varepsilon, \delta}(t) = \Y^{\varepsilon, \varkappa, \delta}(t) = Y(X^{\varepsilon, \varkappa, \delta}(t)).$$
% where $Y(x)$ is the projection of $\mathbb{R}^{2n}$ on $\Gamma_H$.

Our goal in this section is to prove that the measures induced in the space of continuous functions $\varphi : [0,T] \rightarrow \Gamma$ by the process $\Y^{\varepsilon, \delta}(t)$ converge weakly to a measure corresponding to a diffusion process $\Y_t^{\delta}$ on $\Gamma$ and to calculate the characteristics of this limiting diffusion process.

It  was proved in \cite{bib7} that if the assumptions A1 - A5 are satisfied and $$b(x) \equiv \beta(x) \equiv 0, x \in \mathbb{R}^{2n},$$ the distribution in the space $C_{0,T}$ of continuous functions $[0,T] \rightarrow \Gamma$ corresponding to the process $\Y(t)^{\varepsilon, \delta}$ converges as $\varepsilon \rightarrow 0$ to the distribution induced by $a$ diffusion process $\tilde{\Y}^{\delta}(t)$ on $\Gamma$. The process $\tilde{\Y}^{\delta}(t)$ is defined as follows: inside an edge $I_k \subset \Gamma$, the process $\tilde{\Y}^{\delta}(t)$ is governed by the operator
\begin{align}
	\tilde{\ell}_k^{\delta} = \frac{\delta}{2 v_k(z)} \frac{d}{dz} (h_k(z) \frac{d}{dz}), \quad v_k(z) = \oint_{C_k(z)} \frac{ds}{\mid \nabla H(x) \mid} \\
	h_k(z) = \oint_{C_k(z)} \frac{a^{(2)}(x) \nabla H(x) \cdot \nabla H(x)}{\mid \nabla H(x) \mid} ds, \quad k = 1,2, \dots, m,	 \notag
\end{align}
where $ds$ is the volume element on $C_k(z)$. One can check that 
\begin{gather*}
	h_k(z) = \int_{G_k(z)} \text{div}\left( a^{(2)}(x) \nabla H(x) \right) dx, \\
	v_k(z) = \frac{d}{dz} V_k(z)
\end{gather*}
where $G_k(z)$ is the domain in $\mathbb{R}^{2n}$ bounded by $C_k(z)$, $V_k(z)$ is the volume of $G_k(z)$. Put $$\overline{a}^{(2)}_k(z) = \frac{h_k(z)}{v_k(z)};$$ here $\delta \overline{a}^{(2)}_k(z)$ is the diffusion coefficient of the process $\tilde{\mathcal{Y}}^{\delta}(t)$.

To describe the limiting process $\tilde{\Y}^{\delta}(t)$ in a unique way, one should add to the operators $\tilde{\ell}^{\delta}_k$ the domain $D_{\tilde{A}^{\delta}}$ of the generator $\tilde{A}^{\delta}$ of the process $\tilde{\Y}^{\delta}(t)$ (see \cite{bib7,bib9}). A smooth inside the edges function $u(z,k)$ on $\Gamma$ belongs to $D_{\tilde{A}^{\delta}}$ if $u(z,k)$ as well as $\tilde{\ell}^{\delta}_k u(z,k)$ are continuous on $\Gamma$ and $u(z,k)$ satisfies the gluing conditions:
\begin{quotation}
	If $O_i$ is a $\frac11$ type vertex, then $\frac{du(z,k)}{dz}$ should be continuous at $O_k$.
	
	If $O_i$ is of $\frac12$ or $\frac21$ type, the following condition should be satisfied at $O_i$
	\begin{align}
		\sum_{k : I_k \sim O_i} \gamma_{ik} D_k u (O_i) = 0, \gamma_{ik} = \oint_{C_{ik}}\frac{a^{(2)}(x) \nabla H(x) \cdot \nabla H(x) ds}{\lvert \nabla H(x) \rvert} \label{eq9}
	\end{align}
\end{quotation}
Here $C_{ik} = \langle x : Y(x) = O_i \rangle \cap \partial \{ x : Y(x) \in I_k \}$, $D_k u(O_i)$ means differentiation along $I_k$ at vertex $O_i$, $I_k \sim O_i$. The coefficients $\gamma_{ik}$ can be written in the form
\begin{align}
\gamma_{ik} = \int_{G_{ik}} \text{div } \left(a^{(2)}(x) \nabla H(x) \cdot \nabla H(x) \right) dx
\end{align} where $G_{ik}$ is the domain bounded by $C_{ik}$. The operators $\tilde{\ell}_k^{\delta}$ and these gluing conditions determine the limiting slow motion $\tilde{\Y}^{\delta}(t)$ in the unique way.

If the operator $L^{\varepsilon, \varkappa, \delta}$ and corresponding process $X^{\varepsilon, \varkappa, \delta}(t)$ have the drift term $b(x) + \delta \beta(x)$ the limiting process inside the edges will be slightly different, but it can be calculated similarly to the case $b(x) + \delta \beta(x) \equiv 0$: Applying the Ito formula to $H(X^{\varepsilon}(t))$, where $X^{\varepsilon}(t)$ is defined by (\ref{eq7}), and taking into account our assumptions on $a^{(1)}(x)$ and the identity $\nabla H(x) \cdot \overline{\nabla} H(x) \equiv 0$, we get
\begin{align}
	H(X^{\varepsilon}(t)) - H(x) = \sqrt{\delta} \int_0^t \nabla H(X^{\varepsilon}(s)) \cdot \sigma^{(2)}(X^{\varepsilon}(t)) dW_s^{(2)} + \delta \int_0^t L_2 H(X^{\varepsilon}(s)) ds \notag \\
	+ \delta \int_0^t \nabla H(X^{\varepsilon}(s)) \cdot \beta(X^{\varepsilon}(s)) ds + \int_0^t \nabla H(X^{\varepsilon}(s)) \cdot b(X^{\varepsilon}(s)) ds
\end{align}
Using the self-similarity property of the Wiener process, one can find a one dimensional Wiener process $\tilde{W}(t)$ such that the stochastic integral can be replaced by $$\sqrt{\delta} \tilde{W} \left( \int_0^t a^{(2)}(X^{\varepsilon}(s)) \nabla H(X^{\varepsilon}(s)) \cdot H(X^{\varepsilon}(s)) ds \right).$$ This replacement allows us to reduce the averaging of the stochastic integral to averaging of a \emph{non-stochastic} integral.

For each continuous function $f(x), x \in \mathbb{R}^2$, and a level set component $C_k(z)$ not containing critical points, put $$\overline{f}_k(z) = \oint_{C_k(z)} f(x) m_{z,k}(x) dx$$

Define the diffusion process $\Y^{\delta}(t) = (Z_t^{\delta}, k_t^{\delta})$ on $\Gamma$ as the process governed by the operators $\ell_k^{\delta}$, 
\begin{align*}
	\ell_k^{\delta} u(z,k) &= \tilde{\ell}_k u + (\delta \overline{\beta}_k(z) + \overline{b}_k(z) \frac{du}{dz} \\
	&= \frac{1}{2v_k(z)} \frac{d}{dz} \left(h_k(z) \frac{du}{dz}\right) + (\delta \overline{\beta}_k(z) + \overline{b}_k(z)) \frac{du}{dz}
\end{align*}
where $k = 1, \dots, m,$ inside the edges with the same domain as the generator of the process $\tilde{\Y}^{\delta}(t)$.
\vspace{2.5mm}\\
% \begin{theorem}[Theorem 1.]
\noindent \textbf{Theorem 1.} \label{thm1} Assume that the assumptions A1 - A5 are satisfied. %and the vector field $b(x)$ has bounded continuous derivatives.
	Then the measures induced by the process $\Y^{\varepsilon, \delta}(t) = Y(X^{\varepsilon, \varkappa, \delta}(t))$ in the space $C_{0,T}$ of continuous functions $\varphi : [0,T] \rightarrow \Gamma$ provided with the uniform topology converge weakly as $\varepsilon \rightarrow 0$ to the measure induced in $C_{0,T}$ by the process $\Y^{\delta}(t), \Y^{\delta}(0) = Y(x)$.
% \end{theorem}
\vspace{2.5mm}\\
The proof of this theorem is based on several lemmas.
\vspace{2.5mm}\\
\noindent \textbf{Lemma 1.} \label{lem1} The measures in $C_{0,T}$ induced by processes $\Y_t^{\varepsilon,\delta}, \varepsilon \rightarrow 0$ are tight in the weak topology.
\vspace{2.5mm}\\
This lemma can be proved similarly to the Lemma 8.3.2 from \cite{bib9}, and we omit the proof.
\vspace{2.5mm}\\
\noindent \textbf{Lemma 2.} \label{lem2} Let $f(x), x \in \mathbb{R}^2, Y(x) \in I_k$, be twice continuously differentiable, $(z,k) \in I_k \subset \Gamma$, $I_k^{\gamma} = \{ (z,k) \in I_k$, distance between the point $(z,k) \in I_k$ and the ends of $I_k$ is greater than $\gamma \}$, where $\gamma$ is small enough. Let $\tau = \tau^{\gamma} = \min \{ t : Y(X^{\varepsilon}(t)) \notin I_k^{\gamma} \}$. Then, for each $T < \infty, Y(x) \in I_k^{\gamma}, k = 1, \dots, m$ and $h > 0$
	\begin{align}
		\lim_{\varepsilon \downarrow 0} P_x \left\{ \max_{0 \leq t \leq T \wedge \tau} \mid H(X^{\varepsilon}(t) - Z_t^{\delta} \mid > h \right\} = 0 \label{eq12}
	\end{align}
% \vspace{2.5mm}\\
\begin{proof}[Proof.]
	It follows from the standard averaging principle \cite{bib9,bib11}, that for any $f \in C^2(\mathbb{R}^{2n})$, $x$ such that $Y(x) \in I_k$, and any $\delta > 0, T < \infty$
	\begin{align*}
	\lim_{\varepsilon \downarrow 0} P_x \left\{ \max_{0 \leq t \leq T} \lvert \int_{0}^{t \wedge \tau} 
	\left( f(X^{\varepsilon}(s)) - \overline{f}_k (Y(X^{\varepsilon}(s))) \right) ds \rvert > \delta \right\} = 0 % \tag{12}
	\end{align*}
	This equality together with the properties of the Wiener process implies that each limiting in the uniform topology point $\hat{\mathcal{Y}}_t^{\delta}$ of the family of processes $Y(X^{\varepsilon, \varkappa, \delta}(t))$ on $\Gamma$ as $\varepsilon \rightarrow 0$ satisfies the equation 
	\begin{gather*}
		\dot{\hat{\mathcal{Y}}}_t^{\delta} = \sqrt{\delta} \dot{\tilde{W}} \left( \int_0^t (\overline{a^{(2)} \nabla H \cdot \nabla H}) (\hat{\Y}^{\delta}(s) ds \right) + \delta \overline{L_2 H}(\hat{\Y}^{\delta}(s)
		+ (\overline{\nabla H \cdot b})(\hat{\Y}^{\delta}(t) \\
		\hat{\Y}^{\delta}(0) = Y(x), t < \hat{\tau} = \min \{ t : \hat{Y}_t \notin I_k^{\delta} \}
	\end{gather*}
	where $\tilde{W}(t)$ is an appropriate Wiener process. Here $\overline{f}(z)$ means the averaging of a function $f(x), x \in \R^{2n}$, with respect to the invariant density $m_{z,k}(x)$ on $C_k(z)$. It is easy to check that the distributions of the process $\hat{\Y}^{\delta}(t)$ and $Z_t^{\delta}$, $0 \leq t \leq \tau$ coincide. This implies (\ref{eq12}).
\end{proof}
Denote by $\alpha(x) = \alpha_{\gamma}(x)$ a smooth function such that $0 \leq \alpha(x) \leq 1$, $\alpha(x) = 1$ if $Y(x) \in \cup_{k} I_k^{\gamma}$ and $\alpha(x) = 0$ if $Y(x) \notin \cup_k I_k^{\gamma/2}$. Let $b_{\gamma}(x) = b(x) \alpha(x), \beta_{\gamma}(x) = \beta(x) \alpha(x)$. Denote by $X^{\varepsilon, \delta}_{\gamma}(t)$ the process in $\mathbb{R}^{2n}$ defined by equation (7) with $b$ and $\beta$ replaced by $b_{\gamma}(x)$ and $\beta_{\gamma}(x)$ respectively. 
% \begin{lemma}
\vspace{2.5mm}\\
\noindent \textbf{Lemma 3.} \label{lem3} The Theorem \ref{thm1} holds for processes $X_{\gamma}^{\varepsilon,\delta}(t)$ for any small $\gamma > 0$.
% \end{lemma}
\begin{proof}[Proof of Lemma \ref{lem3}.]
	The proof of Lemma \ref{lem3} follows from Lemmas \ref{lem1} and \ref{lem2} and the main result of \cite{bib7}, since the proof of the latter result is based just on the tightness of distributions corresponding $\Y^{\varepsilon, \delta}(t)$ (Lemma \ref{lem1}), uniform convergence in probability inside the edges (Lemma \ref{lem2}), and some estimates inside small neighborhoods of the vertices. But $X_{\gamma}^{\varepsilon}((t)$, inside these neighborhoods, coincides with the process considered in \cite{bib7}, therefore the estimates hold for the process $X_{\gamma}^{\varepsilon}(t)$ as well.
\end{proof}
Consider now a star-shape graph $\Gamma$ consisting of edges $I_1, \dots, I_m$ and one vertex $O$ (Figure \ref{fig3}). Let $z = z(y)$ be the distance from a point $y \in \Gamma$ to the vertex $O$. Let $(z(t), i(t))$ be the diffusion process on $\Gamma$ which is governed by the operator
\begin{align*}
	\ell_i = \frac12 \sigma_i^2(z) \frac{d^2}{dz^2}
\end{align*} inside the edge $I_i \subset \Gamma, i = 1, \dots, m$ and by the gluing condition $$\sum_{i=1}^m \alpha_i D_i u(0) = 0, \alpha_i \geq 0, \sum_{i=1}^m \alpha_i = 1.$$ Here $D_i$ means differentiation in $z$ along the edge $I_i$, functions $u$ and $\ell_i u$ are assumed to be continuous on $\Gamma$.

%%% INSERT FIGURE 3 %%%

\begin{figure}
	\center{\includegraphics[scale=0.4]{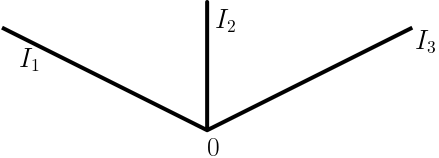}}
    \caption{\label{fig3}}
\end{figure}

Assume that the coefficients of the operators $\ell_i$ are smooth inside the edges 
	\begin{align*}
	\lim_{z \rightarrow 0} \sigma_i^2(z) \mid \ln z \mid = c_i^{(1)} \in (0, \infty), \lim_{z \rightarrow 0} \frac{d \sigma_i^2(z)}{dz} \cdot z \ln^2 z = c_i^{(2)} \in (0, \infty) % \tag{15}
	\end{align*}
%It follows from \cite{bib7} that such a diffusion process exists and $O$ is a regular point for this process where the process spends time zero with probability 1.
Such a diffusion process $(z(t), i(t))$ exists, $O$ is a regular point, and the process spends time zero at $O$ with probability 1.
% \begin{lemma}
\vspace{2.5mm}\\
\noindent \textbf{Lemma 4.} \label{lem4} There exists a one dimensional Wiener process $W_t$ and a continuous non-decreasing process $\lambda_t$ which are measurable with respect to the $\sigma$-field  $\mathcal{F}_t$ generated by $(z(s), i(s)), 0 \leq s \leq t$, such that $$\dot{z}(t) = 
\sigma_{i(t)}(z(t),i(t)) \dot{W}_t + \dot{\lambda}_t.$$ The process $\lambda_t$ increases only when $z(t) = 0$.
% \end{lemma}
\begin{proof}
	The proof of this lemma follows from the proof of Lemma 2.2 of \cite{bib6}. It is assumed in that paper that $\sigma_i^2(z)$ in is strictly positive and smooth. Our assumptions allows us to use the same arguments as in \cite{bib6}.
\end{proof}
% \begin{lemma}
\noindent \textbf{Lemma 5.} \label{lem5} Let $P$ be the probability measure induced in the space $C_{0,T}$ of continuous functions $\varphi : [0,T] \rightarrow \Gamma$ by the diffusion process $(z(t), i(t))$ defined above, and $W_t$ be the Wiener process introduced in Lemma \ref{lem4}. Assume that a vector field $h(z,i), (z,i) \in \Gamma$, satisfies the condition:
\begin{gather*}
	h(z,i) = \sigma_i(z)e(z,i) \\
	\lvert e(z,i) \rvert \leq M < \infty \text{ for } (z,i) \in \Gamma.
\end{gather*}
	Define the probability measure $\mathcal{Q}$ in $C_{0,T}$ such that $$\frac{d\mathcal{Q}}{dP} = \exp \{ \int_0^T e(z(s), i(s)) dW_s - \frac12 \int_0^T \mid e(z(s), i(s)) \mid^2 ds \}.$$
	Then the measure $\mathcal{Q}$ in $C_{0,T}$ corresponds to the diffusion process on $\Gamma$ governed by the operators $\tilde{\ell}_i = \frac12 \sigma_i^2(z) \frac{d^2}{dz^2} + h(z,i) \frac{d}{dz}$ and the same gluing conditions as for the process $(z(t), i(t))$.
% \end{lemma}
\begin{proof}
	This lemma follows from Lemma 3.3 of \cite{bib6}.
\end{proof}
\begin{proof}[Proof of Theorem \ref{thm1}.]
	Let $\Phi[X(\cdot)]$ be a continuous functional on the space of continuous functions on $[0,T], 0 < T < \infty,$ with values in $\Gamma$, $\mid \Phi[X] \mid \leq M < \infty$. We have 
	\begin{align*}
		\mid E_x \Phi[Y(X^{\varepsilon,\delta}(\cdot)] - E_{Y(x)}\Phi[\Y^{\delta}(\cdot)] \mid \leq \mid E_x \Phi[Y(X^{\varepsilon,\delta)}] - E_x \Phi[Y(X_{\gamma}^{\varepsilon, \delta})] \mid \\
		+ \mid E_{Y(x)} \Phi[Y(X^{\varepsilon,\delta}_{\gamma}] - E_{Y(x)}\Phi[\Y_{\gamma}^{\delta}] \mid + \mid E_{Y(x)}\Phi[Y_{\gamma}^{\delta}] - E_{Y(x)} \Phi[\Y^{\delta}] \mid \\
		= A_1 + A_2 + A_3 \\
	\end{align*}
	To get the upper bound for the first term $A_1$ in the right hand side of this inequality, note that, according to the Girsanov theorem, the processes $X^{\varepsilon,\delta}(t)$ and $X^{\varepsilon, \delta}_{\gamma}(t), X^{\varepsilon, \delta}(0) = X^{\varepsilon, \delta}_{\gamma}(0) = x$ induce in the space of trajectories on the time interval $[0,T]$ measures $\mu$ and $\mu_{\gamma}$ respectively such that $$\frac{d\mu_{\gamma}}{d\mu} = \exp \{ \frac{1}{\sqrt{\delta}} \int_0^T e(X^{\varepsilon,\delta}(s)) dW_s - \frac{1}{2\delta} \int_0^T \mid e(X^{\varepsilon, \delta}(s)) \mid^2 ds \}$$ where $e(x) = e_{\gamma}(x) = [a^{(2)}(x)]^{-1/2}[b(x) - b_{\gamma}(x) + \delta (\beta(x) - \beta_{\gamma}(x)]$. Note that, according to our assumptions, $\mid e(x) \mid$ is a bounded function equal to zero if $Y(x) \in \cup_k I_k^{\gamma}$. Then we get 
	\begin{align*}
		0 \leq A_1 \leq M \cdot E_x \mid \frac{d\mu_{\gamma}}{d\mu} - 1 \mid \leq M \cdot (E_x(\frac{d\mu_{\gamma}}{d\mu} - 1)^2)^{1/2} \\
		\leq E(\exp \{ \frac{1}{\delta} \int_0^T \mid e_\gamma \mid^2 ds \} - 1) \rightarrow 0 \text{ as } \gamma \rightarrow 0
	\end{align*}
	for each $\delta > 0$ uniformly on $\varepsilon$ since $a^{(2)}(x)$ is non-degenerate $b$ and $\beta$ are bounded and the time spent by $X_t^{\varepsilon, \delta}$ during the time interval $[0,T]$ inside the $\gamma$-neighborhood of the set $\{ x \in \R^{2n} : Y(x) \text{ is a vertex} \}$ tends to zero together with $\gamma$.
	
	The second term $A_2$ tends to zero as $\varepsilon \rightarrow 0$ because of Lemma \ref{lem3}.
	
	To show that $A_3$ tends to zero, note that $\overline{b}_i(z) + \delta\overline{\beta}_i(z)$ at each vertex tends to zero as $z$ tends to vertex in the same way or even faster as the diffusion coefficient. Therefore, the function $h(z,i)$ in Lemma \ref{lem5} is bounded. For any $\theta \in (0,1]$, one can find an integer $N$ such that during the time interval $[0,T]$ processes $\Y_{\gamma}^{\delta}(t)$ and the process $\Y^{\delta}(t)$ will make not more than $N$ transitions between different vertices with probability $1-\theta$ for each fixed $\delta > 0$. These properties imply that the measures in the space of trajectories of $\Y^{\delta}(t)$ and $\Y^{\delta}_{\gamma}(t)$ are absolutely continuous with the density given by Lemma \ref{lem5}. Then one can check that $A_3 \rightarrow 0$ as $\gamma \rightarrow 0$ similarly to the estimate of $A_1$.
\end{proof}

\section{Limiting Process on the Graph. Main Results.}
The limiting behavior of the process $\Y_t^{\delta}$ on the graph $\Gamma$ as $\delta \rightarrow 0$ is considered in this section.

Define functions $\hat{b}_i(z)$ and $\overline{b}_i(z)$, $(z,i) \in \Gamma$, as follows
\begin{align*}
	\hat{b}_i(z) = \int_{G_i(z)} \text{div} b(x) dx, \quad \overline{b}_i(z) = \frac{1}{V_i'(z)} \hat{b}_i(z), \quad (z,i) \in \Gamma
\end{align*}
where $G_i(z)$ is the domain in $\R^{2n}$ bounded by the surface $C_i(z) = Y^{-1}(z,i)$, and $V_i(z)$ is the volume of $G_i(z)$.

If $O_0 = (z_0, i)$ is a vertex, $\hat{b}_i(O_0) = \lim_{z \rightarrow z_0}\hat{b}_i(z)$ along $I_i \sim O_0$, $V_i(O_0) = \lim_{z \rightarrow z_0}V_i(z)$ along $I_i$.

We make the following assumption:\\
\textbf{(A6)} There is only a finite number of points $(z,i) \in \Gamma$ where $\hat{b}_i(z) = 0$; $\hat{b}_i(z) \neq 0$ if $(z,i)$ is an interior vertex.

The functions $\hat{b}_i(z)$ and $\overline{b}_i(z)$ are smooth inside the edges and equal to zero at exterior vertices. They can have discontinuities at the vertices of type $\frac12$ and $\frac21$. %If $O_0 = (z_0, i)$ is such a vertex and $I_i \sim O_0$, $i=1,2,3,$ then the surface $C_i(z_0) = \{H(x) = H(Y^{-1}(O_0)), x \in \R^{2n}: \text{connected and bounded} \}$ bounds a set consisting of two domains $G_1$ and $G_2$ attached to the saddle point $O = Y^{-1}(O_0)$ 
Let $O_0 = (z_0, i) = Y(0)$ be such a vertex and $I_i \sim O_0$, $i=1,2,3$. Then the connected component of $C_i(z_0) = \{ x \in \R^{2n} : H(x) = H(0) \}$ containing the point $O$ bounds a set consisting of two domains $G_1$ and $G_2$ attached to the saddle point $O$ (see Figure 1b). Let $Y^{-1}(I_1) \subseteq G_1, Y^{-1}(I_2) \subseteq G_2$, and $Y^{-1}(I_3) \subset \R^{2n} \setminus (G_1 \cup G_2)$. Then $\hat{b}_3(O_0) = \hat{b}_1(O_0) + \hat{b}_2(O_0)$.

If the trajectory of equation $\dot{z}(t) = \overline{b}_i(z_t)$ started at a point of $I_i$ close to $O_0$ is attracted to $O_0$, we say that $I_i$ is the entrance edge for $O_0$, otherwise it is an exit edge for the vertex $O_0$. It is clear that, if (A6) is satisfied, each vertex of the $\frac12$ or of $\frac21$, type has either two entrance edges and one exit edge, or two exit and one entrance edge.

If $O_0' \in \Gamma$ is an exterior vertex and an edge $I_j \sim O_0'$, then $\lim \overline{b}_j(z) = 0$ as the point $(z,j)$ approaches $O_o'$. Such a vertex $O_0'$ can be asymptotically stable equilibrium of the equation $\dot{z}(t) = b_j(z_t)$ or a repelling equilibrium.

Define a continuous with probability 1 Markov process $\Y(t)$ on $\Gamma$: let, inside each edge $I_i \subset \Gamma, \Y(t)$ be the deterministic motion governed by the equation
\begin{equation}
	\dot{\Y}(t) = \overline{b}_i(\Y(t)) \label{eq13}
\end{equation}

As it follows from our assumptions, if $O_0$ is an exterior vertex and $I_i \sim O_0$, then $\lim_{(z,i) \rightarrow 0} \mid \overline{b^i}(z) z^{-1} \mid < c_i < \infty$ (see Lemma \ref{lem6} below). This implies that an exterior vertex is inaccessible in a finite time for $\Y(t)$. But each interior vertex, according to Lemma \ref{lem6}, is accessible in a finite time. Therefore, to define $\Y(t)$ for all $t > 0$, we should say what $\Y(t)$ is doing after hitting such a vertex. If the interior vertex $O_0$ has just one exit edge attached to $O_0$, then $\Y(t)$ leaves $O_0$ through this exit edge without any delay. If there are two exit edges from $O_0$, say $I_{i_1}$ and $I_{i_2}$ and one entrance edge, then $\Y(t)$ without any delay, goes from $O_0$ to $I_{i_1}$ or $I_{i_2}$ with probabilities $p_{i_1}(O_0)$ and $p_{i_2}(O_0)$ respectively, where 
\begin{equation}
	p_{i_k}(O_0) = \frac{\mid \hat{b}_{i_k}(O_0) \mid}{\mid \hat{b}_{i_1}(O_0) \mid + \mid \hat{b}_{i_2}(O_0) \mid}, \quad k=1,2 \label{eq14}
\end{equation}
Here $\hat{b}_{i_k}(O_0) = \int_{G_k(O_0)} \text{div} b(x) dx$, where $G_1(O_0)$ and $G_2(O_0)$ are two domains in $\R^{2n}$ attached to the saddle point $O = Y^{-1}(O_0)$ introduced above. Such domains correspond to each vertex of type $\frac12$ or $\frac21$ (see Figure 1b). It is easy to see that equation (19) together with described above conditions at the vertices define the stochastic process $\Y(t)$ in the unique way.
% \begin{theorem}[Theorem 2.]
\vspace{2.5mm}\\
\noindent \textbf{Theorem 2.} \label{thm2} Let conditions (A1) - (A6) be satisfied. Then the process $\Y^{\delta}(t)$ on $\Gamma$ converges weakly in the space of continuous functions $\varphi:[0,T] \rightarrow \Gamma$ to the process $\Y(t), \Y(0) = \Y^{\delta}(0)$ on each finite time interval $[0,T]$.
% \end{theorem}
\vspace{2.5mm}\\
To prove this theorem, we need several lemmas. We denote by $c_{k,i}$ various nonnegative constants.
%\begin{lemma}
\vspace{2.5mm}\\
\noindent \textbf{Lemma 6.} \label{lem6} Assume that the conditions (A1) - (A6) are satisfied, $\Gamma$ is the Reeb graph for the Hamiltonian $H(x), x \in \R^{2n}$, $O_0$ is a vertex of $\Gamma$, and $I_i \sim O_0$. Without loss of generality, we assume that $H(0) = 0$.
	\begin{itemize}
		\item[(a)] If $O_0$ is an exterior vertex, then 
\begin{equation}
 		\lim_{z \rightarrow 0} \frac{\overline{b}_i(z)}{\lvert z \rvert} = c_{2,i} \geq 0,~~\lim_{z \rightarrow 0} \frac{\overline{a}_i^{(2)}(z)}{\lvert z \rvert} = c_{2,i} > 0.
\end{equation}
		\item[(b)] If $O_0$ is an interior vertex, then 
		\begin{equation}
			\lim_{z \rightarrow 0} \overline{b}_i(z) \lvert \ln \lvert z \rvert \rvert = c_{3,i} > 0,~~
			\lim_{z \rightarrow 0} \overline{a}^{(2)}_i(z) \lvert \ln \lvert z \rvert \rvert = c_{4,i} > 0. \label{eq16}
		\end{equation}
	\end{itemize}
% \end{lemma}
\begin{proof}[Proof.]
	Without loss of generality we can assume that in a small neighborhood of the critical point $O = Y^{-1}(O_0)$, $H(x) = \pm x_1^2 \pm x_2^2 \pm \dots \pm x_{2n}^2$. If $O_0$ is an exterior vertex, all signs in this quadratic form are the same. To be specific, let all of them be positive (this means that $\Y^{-1}(O_0)$ is a minimum point of $H(x)$). Then the set $G_z = \{ x \in \R^{2n} : H(x) < z \}$, for small $z$ is the ball $B_{\sqrt{z}}(O)$ of radius $\sqrt{z}$ with the center at $O$. The volume $V(z)$ of this ball is $c_5 \lvert z \rvert^{2n}$ and $V'(z) = 2nc_5z^{2n-1}$. The numerator for $\overline{b}_i(z)$ is of order div $b(0) z^{2n}$. Therefore, $\lvert \overline{b}(z) \rvert \leq c_{1,i} \lvert z \rvert$. If div $\left(b(O)\right) = 0, \lvert \overline{b}(z) \rvert = o(z)$ as $z \rightarrow 0$ and $c_{1,i} = 0$, otherwise $c_{1,i} > 0$. The second statement of (15) can be proved similarly: $c_{2,i} > 0$ since $\overline{a}^{(2)}(z) > 0$. %The numerator in the expression for $\overline{a}^{(2)}(z)$ is strictly positive and therefore $c_{2,i} > 0$.
	
	Let now $O_0$ be an interior vertex and $I_i \sim O_0$. To be specific, assume that $H(x) \leq 0$ if $Y(x) \in I_i$. According to our assumptions, $\hat{b}_i(O_0) \neq 0$. Denote by $W(z)$ the volume of the domain. $$\mathcal{E}_z = \{ x \in \R^{2n} : -z < H(x) < H(0) = 0 \}.$$
	
	To prove the first statement in (\ref{eq16}), it is sufficient to show that $\lim_{z \rightarrow 0} W'(z) \lvert \ln z \rvert^{-1} = c_6 \in (0, \infty)$. We can assume that $O$ is the origin and in a small enough neighborhood of $O$
	\begin{equation}
		H(x) = x_1^2 + \dots + x_k^2 - x_{k+1}^2 - \dots x^2_{k+l},~~~ k \geq 1, l > 1, k+l = 2n. \label{eq17}
	\end{equation}
	Put 
	\begin{gather*}
		\sum_{i=1}^k x_i^2 = u_1^2,~~
		\sum_{i=k+1}^{k+l}x_i^2 = u_2^2.
	\end{gather*} Let $\mathcal{E}_z^h = \mathcal{E}_z^h(O) = \{ x \in \R^{2n} : u_1^2 < h, u_2^2 < h, u_1^2 - u_2^2 < z \}$, where $h$ is small enough  so that (\ref{eq17}) holds in $\mathcal{E}_z^h$.
	
	Consider the case $k=1,l=2n-1$. Denote by $S(z)$ the area of the domain 
	\begin{gather*}
		\{(u_1,u_2) \in \mathbb{R}^2 : 0 < u_1 < \sqrt{h}, \\
		\lvert u_2 \rvert < \sqrt{h}, u_1^2 - u_2^2 < z\}.
	\end{gather*}
	
	We have $$S(z) = \int_{-\sqrt{h}}^{\sqrt{h}}du_2 \sqrt{z + u_2^2} = h \sqrt{h^2 + z} - \frac{z}{2} \ln z + z h(h +\sqrt{h^2 + z}).$$
	
	One can derive from the last equality, that
	\begin{equation}
		\lim_{z \rightarrow 0} \frac{S'(z)}{\lvert \ln z \rvert} = c_6 > 0. \label{eq18}
	\end{equation}
	
	Denote by $A_m(\rho)$ the area (volume) of the sphere of radius $\rho$ in $\R^m$, $A_m(\rho) = c_7 \rho^{m-1}$. Then, taking into account that the domain $\mathcal{E}_z^h$ is invariant with respect to rotations around the axis $x_1$ for the case $k=1$, we get the following expression for the volume $W_h(z)$ of $\mathcal{E}_z^h$:
	\begin{equation}
		W_h(z) = \int_0^z S'(y) A_{2n-2}(h-y)dy = c_8 \int_0^z S'(y)(h-y)^{2n-2}dy,~~c_8 > 0. \label{eq19}
	\end{equation}
	It follows from (\ref{eq18}) and (\ref{eq19}), that $$\lim_{z \rightarrow 0} \frac{W_h(z)}{\lvert \ln z \rvert} = c_9 > 0.$$
	
	There are no critical points in $\mathcal{E}_z \setminus \mathcal{E}_z^h$, therefore the volume $\tilde{W}(z)$ of this set is a smooth function with a bounded derivative. Thus,
	\begin{equation}
		\lim_{z \rightarrow 0} \frac{W'(z)}{\lvert \ln z \rvert} = \lim_{z \rightarrow 0} \frac{W_h'(z)}{\lvert \ln z \rvert} = c_9 > 0. \label{eq20}
	\end{equation}
	Equality (\ref{eq20}) implies the first statement of (\ref{eq16}) for $k=1$. The second statement of (\ref{eq16}) is proved in the same way: the positive definiteness of $a^{(2)}(x)$ implies that $c_{4,i} > 0$. The proof of (\ref{eq16}) for $k > 1$ is similar. One just should take into account that the set $\mathcal{E}_z^h$ in this case invariant with respect to rotations preserving quadratic forms $u_1^2$ and $u_2^2$, and in formula (\ref{eq19}) $A_{2n-2}$ should be replaced by the product of areas of spheres in $\R^{k-1}$ and in $\R^{l-1}$.
\end{proof}
\begin{proof}[Remark.]
	Let $O_0$ be an interior vertex with $k=1, l=2n-1$ and $I_i \sim O_0$. One can derive that %from (25) that 
$$\lim_{(z,i)\rightarrow 0} \lvert \frac{d}{dz} \left( \frac{1}{\sqrt{\overline{a}_i^{(2)}}(z)} \right) z \cdot (\ln |z|)^{3/2} \rvert = c_{i,10} \in (0,\infty)$$
This implies that the function $$\lvert \overline{a}^{(2)}(z) \cdot \frac{d}{dz} \left( \frac{1}{\sqrt{\overline{a}^{(2)}(z)}} \right) \rvert$$ is of order $\frac{1}{\lvert z (\ln z)^{3/2} \rvert}$ as $z \rightarrow 0$ and is integrable at 0.
\end{proof}
Let $C_{0,T}(\Gamma)$ be the space of continuous functions $\varphi: [0,T] \rightarrow \Gamma$ provided with the uniform topology and let $K$ be a compact subset of $\Gamma$.
% \begin{lemma}
\vspace{2.5mm}\\
\noindent \textbf{Lemma 7.} \label{lem7} The measures in $C_{0,T}(\Gamma)$ induced by the process $\Y^{\delta}(t)$, $\Y^{\delta}(0) \in K, \delta \in (0,1)$, are tight in the weak topology in $C_{0,T}(\Gamma)$.
% \end{lemma}
\begin{proof}
	The proof is similar to that of Lemma 8.3.2 from \cite{bib9}, and we thus omit it.
\end{proof}
Let $O_0$ be an interior vertex and $I_1, I_2, I_3 \sim O_0$. Denote by $U_h = U_h(O_0)$ the $h$-neighborhood of $O_0 \in \Gamma$, and let $$\tau_h^{\delta} = \min \{ t : \Y^{\delta}(t) \in \partial U_h \},$$ where $\partial U_h$ is the boundary of $U_h$. As usual, we use the subscript for the probability and expectation to indicate the initial point.

The distribution of $\Y^{\delta}(\tau_h^{\delta})$ and $E_y\tau_h^{\delta}$ as functions of the starting point $y \in U_h(0)$ can be found explicitly as solutions of ordinary differential equations in $U_{\delta}(O_0)$ with the gluing conditions (\ref{eq9}) at $O_0$ and corresponding boundary conditions on $\partial U_h(O)$. This calculation as well as the calculations of the limit as $\delta \rightarrow 0$ are similar to Lemmas 2.2 and 2.3 of \cite{bib2}. Therefore, we omit the proofs of the next two lemmas.
% \begin{lemma}
\vspace{2.5mm}\\
\noindent \textbf{Lemma 8.} \label{lem8} Let $O$ be an interior vertex and $I_1, I_2, I_3 \sim O$. Assume that assumptions (A1) - (A8) are satisfied.
	
	If $I_1$ is the only exit edge from $O$, then for a small enough $$\lim_{\delta \rightarrow 0} P_{O_0}\{\Y^{\delta}_{\tau_h^{\delta}} \in I_1 \} = 1.$$
	
	If there are two exit edges $I_1$ and $I_2$, then for a small enough
	\begin{align*}
		\lim_{\delta \rightarrow 0} P_{O_0} \{ \Y^{\delta}_{\tau_h^{\delta}} \in I_3 \} = 0 \\
		\lim_{\delta \rightarrow 0} P_{O_0} \{ \Y^{\delta}_{\tau_h^{\delta}} \in I_j \} = P_j(O_0), j = 1,2,
	\end{align*}
	where $P_1(O_0)$ and $P_2(O_0)$ are given by formula (\ref{eq14}).
% \end{lemma}
% \begin{lemma}
\vspace{2.5mm}\\
\noindent \textbf{Lemma 9.} \label{lem9} Let the conditions of Lemma 8 be satisfied. Then there exists $h_0 > 0$ and $A > 0$ such that for any $h \in (0,h_0)$, any $y \in U_h(O_0)$ and any small enough $\delta > 0$: $$E_y\tau_h^{\delta} \leq Ah \lvert \ln h \rvert$$
% \end{lemma}
% \begin{lemma}
\vspace{2.5mm}\\
\noindent \textbf{Lemma 10.} \label{lem10} Let points $(z_1,i)$ and $(z_2,i)$ belong to the interior of an edge $I_i \subset \Gamma, z_1 < z_2, I_i(z_1,z_2) = \{(z,i) \in I_i : z_1 < z < z_2\}$. Put $\tau^{\delta}(z_1, z_2) = \min \{t : \Y^{\delta}(t) \notin I_i (z_1, z_2)\}$. Then, for any $h > 0, t > 0$ and $y \in I_i(z_1, z_2)$. $$\lim_{\delta \rightarrow 0} P_y \{ \max_{0 \leq s \leq t \wedge \tau^{\delta}} \lvert\Y^{\delta}(s) - \Y (s)\rvert > h\} = 0$$  
% \end{lemma}
\begin{proof}
	The proof of this last lemma is simple and follows from Theorem 1.1.2 of \cite{bib9}.
\end{proof}

\begin{proof}[Proof of Theorem \ref{thm2}.] Strong Markov property of process $\Y^{\delta}(t)$ and Lemmas 8-10 provide the convergence of finite-dimensional distributions of $\Y^{\delta}(t)$ to corresponding distributions of $\Y(t), \Y(0) = \Y^{\delta}(0) = y$. Together with the tightness of distributions given by Lemma 7, this implies the weak convergence of processes $\Y^{\delta}(t)$ to $\Y(t)$. 
\end{proof}

Note that for each $y \in \Gamma$, process $\Y(t)$, $\Y(0) = y$, has just a finite number of trajectories. Stochasticity of $\Y(t)$ appears just when it goes through an essential vertex -- a vertex of type $\frac12$ or $\frac21$ with two exit edges attached to the vertex.

\noindent \textbf{Assumption (A7):} Let $\hat{b}_i(z)$ has just a finite number of zeros on $\Gamma$, and if $\hat{b}_{i_0}(z_0) = 0$ for an interior point of $I_{i_0}$ then $\hat{b}_{i_0}(z)$ change the sign at this point.

\vspace{2.5mm}

The function $\hat{b}_i(z)$ is equal to zero at exterior vertices. Such a vertex $O_0$ can be stable if $Y^{-1}(O_0)$ is a minimum of $H(x)$ and $\hat{b}_i(z)$ is negative in a vicinity of $O_0$ or if $Y^{-1}(O_0)$ is a maximum of $H(x)$ and $\hat{b}_i(z)$ is positive in a vicinity of $O$. The process $\Y(t)$ can have stable equilibria inside the edges. Let $(z_0, i_0)$ be a stable equilibria inside $I_{i_0}$. Denote by $\mu_{z_0,i_0}$ the measure in $\R^{2n}$ concentrated on $C_{i_0}(z_0)$ with the density $$\lvert \nabla H(x) \oint_{C_{i_0}(z_0)} \frac{dS}{\lvert \nabla H(y) \rvert} \rvert^{-1}, \quad x \in C_{i_0}(z_0).$$ If $(z_0, i_0)$ is an exterior vertex we denote by $\mu_{z_0, i_0}$ the $\delta$-measure at $Y^{-1}(z_0, i_0) \in \R^{2n}$.

We say that the \textbf{Assumption (A8)} is satisfied if for any $(z_1, i) \in \Gamma$ there is $z_2 > z_1$ such that $\hat{b}_i(z_2) < 0$. This assumption implies that each trajectory $\Y(t)$ stay in a bounded part of $\Gamma$ forever.

Let assumptions (A1) - (A8) be satisfied. Then for each initial point $y \in \Gamma$, there exists a finite set $\Phi(y)$ of asymptotically stable points which can be limiting points for trajectories of $\Y(t), \Y(0) = y$, as $t \rightarrow \infty$.

For each $y \in \Gamma$ and each $(z_0, i_0) \in \Phi(y)$, denote by $\Y[y \rightarrow (z_0,i_0)]$ the trajectory leading from $y$ to $(z_0, i_0)$. If such a trajectory exists it is unique. Denote by ess$[y \rightarrow (z_0,i_0)]$ the set of essential vertices belonging to $\Y[y \rightarrow (z_0,i_0)]$.

For each vertex $(z,i) \in \text{ess}[y\rightarrow (z_0, i_0)]$, there are two exit edges attached to $(z,i)$: edge $I_{k_1}(z,i)$ along which $\Y(t)$ goes to $(z_0,i_0)$, and another edge $I_{k_2}(z,i)$.

For each $(z,i) \in \text{ess}[y \rightarrow (z_0, i_0)]$ put
\begin{equation} \label{xx1}
	P[(z,i) \rightarrow (z_0, i_0)] = \int_{G_{k_1}(z)} \text{div} b(x) dx \left( \int_{G_{k_1}(z) \cup G_{k_2}(z)} \text{div} \left( b(x) \right)dx \right)^{-1},
\end{equation}
\begin{equation} \label{xx2}
	P[y \rightarrow (z_0, i_0)] = \prod_{(z,i) \in \text{ess}[y \rightarrow (z_0, i_0)} P[(z,i) \rightarrow (z_0, i_0)]. 
\end{equation}
Let, for instance, the Reeb graph be as shown in Figure 2, $\Y(0) = x_0,$ and $\hat{b}_i(z) < 0$ on the whole $\Gamma$ except the exterior vertices of $\Gamma$ where $\hat{b}_i(z) = 0$. The asymptotically stable equilibria in this case are the exterior vertices corresponding to the minimum points of $H : \Phi(x_0) = \{ O_5, O_6, O_{11}, O_{12} \}$. Essential vertices on the path leading from $x_0$, say, to $O_{11}$ are $O_2$ and $O_{10}$: ess$[x_0 \rightarrow O_{11}] = \{ O_2, O_{10} \}$. To go from $x_0$ to $O_{11}$, trajectory $\Y(t), \Y(0) = x_0$, should go along $I_7$ from $O_2$ and along $I_{11}$ at the vertex $O_{10}$.

It is easy to derive from Theorems \ref{thm1} and \ref{thm2} the following result.
% \begin{theorem}[Theorem 3.]
\vspace{2.5mm}\\
\noindent \textbf{Theorem 3.} \label{thm3} Let assumptions (A1) - (A8) be satisfied. Then for any $h>0, y \in \Gamma$, and any continuous function $f(x), x \in \R^{2n}$, there exists $T_0$ such that for $T > T_0$
	\begin{equation} \label{xx3}
		\lim_{\delta \rightarrow 0} \lim_{\ve \rightarrow 0} \lvert E_y f(X^{\ve}_T) - \sum_{(z,i) \in \Phi(y)} \int_{\R^{2n}} f(x) \mu_{(z,i)}(dx) \cdot p[y \rightarrow (z,i)] \rvert < h.
	\end{equation}
% \end{theorem}
\begin{example}
	Let $n=2, x=(p,q),p=(p_1,p_2) \in \R^2, q = (q_1, q_2) \in \R^2, H(x) = \frac{\lvert P \rvert^2}{2} + F(q)$. Let $\Gamma$ be the Reeb graph for $H(x)$. Assume that $F(q), q \in \R^2$, is a Morse function and $\lim_{\lvert q \rvert \rightarrow \infty} F(q) = \infty$. Let $F(q)$ has two local minima at $q^{(1)}$ and at $q^{(2)}$ and one maximum at $q^{(3)}$ ($q^{(1)}, q^{(2)}, q^{(3)} \in \R^2$). Because of topological arguments, such a function should have also two saddle points, say at $q^{(4)}$ and $q^{(5)} \in \R^2$. Then $H(x)$ has mimima at $(0,0,q^{(1)}), (0,0,q^{(2)}) \in \R^4$, no maxima, and 3 saddle points. The results of Section 2 imply that one of the saddle points corresponds 
to a $\frac12$-type vertex (denote this vertex by $O$), another - to a vertex of $\frac21$-type, and yet another - to a $\frac11$-type vertex.
	
	Let $b(x) = (b_1(x), b_2(x), b_3(x), b_4(x))$ where $(b_1(x), b_2(x)) = -Ap$, where $A = (A_{ij})$ is a constant non-degenerate $2 \times 2$-matrix with negative trace: $\text{tr} A = A_{11} + A_{22} = -\lambda <0$.
	
	To describe the process $\Y(t)$ in an explicit way, introduce new coordinates on $\Gamma:(z,i) - (v,i)$, where the second coordinate $i$ is the same, and $v = v_i(z)$ is the volume of domain $G_i(z)$ bounded by $C_i(z)$. Since div $b(x) = -2\lambda$, 
	$\hat{b}_i(z) = -2\lambda v(z)$. The evolution of $\Y(t)$ inside an edge $I_i$ is described by equations (\ref{eq13}). In our case these equations have the form
	\begin{equation} \label{eq24}
		v_i'(z_t)\dot{z}_t = -2\lambda v_i(z_t)
	\end{equation}
	Let $\tilde{v}_i(t) = v_i(z_t)$. Then (\ref{eq24}) implies that $\tilde{v}_i(s+t) = \tilde{v}_i(s)e^{-2\lambda t}$ until $\Y(s)$ and $\Y(t)$ belong to $I_i$. When $\Y(t)$ hits a vertex of $\frac11$-type or $\frac21$-type, it goes without any delay to the unique exit edge attached to this vertex. If $\Y(t)$ hits vertex $O$, which has two exit edges: an edge $I_{i_1}$ leading to the minimum $(0,0,q^{(1)})$ of $H(x)$ and an edge $I_{i_2}$ leading to $(0,0,q^{(2)})$, then $\Y(t)$ without delay goes to one of these edges with probabilities $$p_{i_1}(0) = \frac{v_1}{v_1 + v_2}, p_{i_2}(0) = \frac{v_2}{v_1 + v_2}.$$ Here $v_1$ is the volume of one of two domains attached to $Y^{-1}(0)$ and bounded by $C(H(0)) = \{ x \in \R^4 : H(x) < H(0) \}$, namely the domain containing $Y^{-1}(I_{i_1})$, $v_2$ is the volume of another domain attached to $Y^{-1}(0)$.
	
	If trajectory $\Y(t), \Y(0) = y$, comes to $O$, then $\Phi(y)$ consists of two points. In this case, for large $t$ and $0 < \ve \ll \delta \ll 1, X^{\ve,\vk,\delta}(t)$ is distributed between small neighborhoods of points $(0,0,q^{(1)}), (0,0,q^{(2)}) \in \R^4$ with probabilities $p_i = \frac{v_i}{v_1 + v_2}, i \in \{1,2\}$, If $\Y(t), \Y(0) = y$, does not go through $O$, $\Phi(y)$ consists just of one exterior vertex, and $X^{\ve,\vk,\delta}(t)$ will be close to the corresponding minimum point of $H(x)$. %for large enough $T$ and $\ve \ll \delta \ll 1$, roughly speaking, is distributed between small neighborhoods of the points $(0,0,q^{(1)})$ and $(0,0,q^{(2)})$ with probabilities $p_{i_1}(0)$ and $p_{i_2}(0)$. In this case, the sum in (29) consists of two terms. If $\Y(t)$ does not go through $O$, the sum has just one term.
\end{example}

\section{Remarks and Generalizations}
\textbf{1.} Theorems \ref{thm1} and \ref{thm2} describe the double limit of $\Y(X^{\ve,\delta}_t)$ on any finite, independent of $\ve$ and $\delta$, time interval $[0,T]$. Similar results hold in the case $T = T(\delta)$ growing as $\delta \rightarrow 0$ but slow enough: if $\delta \ln T(\delta) \rightarrow 0$ as $\delta \rightarrow 0$. If we are interested in longer time intervals, for instance, in deviations of order 1 as $\delta \rightarrow 0$ from an asymptotically stable regime or in transitions between such stable regimes, one should take into account large deviations of the process $\Y^{\delta}(t)$ from $\Y(t)$. Those deviations are governed by the action functional $\frac{1}{\delta}S_{0,T}(\varphi) = \frac{1}{2\delta} \int_0^T \lvert (\overline{a^{(2)}}_i(\varphi_s))^{-1/2} (\dot{\varphi}_s - \overline{b}_i(\varphi_s)) \rvert^2 ds$ (see \cite{bib9}). Using this action functional, one can describe various asymptotics in the exit problem from a neighborhood of an asymptotically stable regime as well as the metastability effects for different time scales (compare with \cite{bib9}).

\vspace{2.5mm}
\noindent \textbf{2.} To be specific, we considered deterministic perturbations of Hamiltonian system by a deterministic vector field $b(x)$ combined with a qualified noise. But, actually, our approach works in a more general situation, for instance, in the case of a divergence free dynamical system having a smooth first integral. Consider, for example, a system in $\R^3$
\begin{equation} \label{xx5}
	\dot{X}(t) = \nabla F(X(t)) \times \ell (X(t)), \quad X(0) = x \in \R^3.
\end{equation}
The function $F(x)$ and the vector field $\ell(x)$ assumed to be smooth enough. It is easy to see that $F(x)$ is a first integral for system
(\ref{xx5}). Let $\lim_{\lvert x \rvert \rightarrow \infty} F(x) = \infty$ and assume also that curl $\ell(x) \equiv 0$. Then, 
\[
	\text{div} \left(\nabla F(x) \times \ell(x)\right) = \ell(x) \cdot \text{curl} \nabla F(x) - \nabla F(x) \cdot \text{curl} \ell(x) = 0, 
\]
 so that the vector field $\nabla F(x) \times \ell(x)$ is divergence free. Thus, the Lebesgue measure in $\R^3$ is invariant for system 
(\ref{xx5}). In this case, the long-time evolution of the system obtained from (\ref{xx3}) after addition to right-hand side a deterministic perturbation combined with a qualified noise can be described in the same way as in the case of Hamiltonian system. Using the results of \cite{bib8} 
instead of \cite{bib7}, perturbation of more general systems can be considered.

\vspace{2.5mm}
\noindent \textbf{3.} Up to now, we considered perturbations of deterministic systems. Sometimes, perturbations of stochastic systems are of interest. Let $X_t$ be the Markov process in $\R^d$ governed by the operator $L$, $Lu(x) = \frac12 \text{div} (a_1(x) \nabla u(x))$, where $a(x)$ is a non-negative definite $d \times d$-matrix.

Assume a Morse function $H(x)$, $\lim_{\lvert x \rvert \rightarrow \infty} = \infty$, exists such that $a_1(x) \nabla H(x) = 0$ for $x \in \R^d$. This condition implies that $H(x)$ is a first integral for the process $X_t$ (see, for instance, \cite{bib7}). Let the process $X_t$ be perturbed by a small vector field combined with even smaller noise. More precisely, let after an appropriate time change, the perturbed process $\dot{X}_t^{\ve, \delta}$ is governed by the operator $L^{\ve,\delta}$:
\begin{align*}
	L^{\ve,\delta}u(x) = \frac{1}{\ve}Lu + b(x) \cdot \nabla u + \frac{\delta}{2}\text{div}(a_2(x)\nabla u(x)),
\end{align*}
where $b(x), x \in \R^d$, is a smooth vector field and $a_2(x)$ is positive definite $d \times d$-matrix.

Let $\Gamma$ be the Reeb graph for $H(x)$, and $Y : \R^d \rightarrow \Gamma$ is the projection of $\R^d$ on $\Gamma$. Then one can expect that the process $\Y_t^{\ve,\delta} = Y(X_t^{\ve,\delta})$ converge weakly as first $\ve \rightarrow 0$ and then $\delta \rightarrow 0$ to a Markov process $\Y_t$ on $\Gamma$. The characteristics of $\Y_t$ can be calculated similarly to the case considered in Sections 3 and 4.

\vspace{2.5mm}
\noindent \textbf{4.} The results of this paper can be reformulated in the terms of partial differential equations, and our approach can be useful in some PDE problems. Consider, for example, the following quasi-linear Cauchy problem:
\begin{align*}
	\frac{\partial u^{\ve,\delta}(t,x)}{\partial t} = \frac{1}{2\ve}\text{div}(a_1(x)\nabla u^{\ve,\delta}) + b(x,u^{\ve,\delta}) \cdot \nabla u^{\ve,\delta} + \frac{\delta}{2}\text{div}(a_2(x)\nabla u), \\
	u^{\ve,\delta}(0,x) = g(x),
\end{align*}
where $a_1(x)$ a non-negative definite and $a_2(x)$ is a positive definite matrices. Assume that a Morse function $H(x)$, $\lim_{\lvert x \rvert \rightarrow \infty}H(x) = \infty$, exists such that $a_1(x) \nabla H(x) \equiv 0$. Then, one can expect that when $\ve \rightarrow 0$ we get a Cauchy problem for a quasi-linear equation with the viscosity of order $\delta$ and gluing conditions independent of $u$. Then, taking $\delta \rightarrow 0$ we get a first order quasi-linear equation with nonlinear gluing conditions. The solutions of this equation usually have discontinuities -- shock waves. One can expect that in this way, we can get some information about shock waves in the multi-dimensional equation (\ref{eq24}).

\vspace{2.5mm}
\noindent \textbf{5.} Systems with several first integrals can be considered. In this case, the limiting process lives on an open book space defined by the first integrals (see \cite{bib9}, Ch. 8 and 9). Then the limiting process will be a deterministic motion inside the pages with gluing conditions on the binding of the open book.

\section*{Acknowledgements}
Finally, I would like to thank Alexander Wentzell for useful discussions of the problems considered in this paper.

\end{document}